\definecolor{LinkColor}{RGB}{145,80,170}
\definecolor{CiteColor}{RGB}{230,125,65}
\definecolor{FileColor}{RGB}{75,170,130}
\newtheoremstyle{theorem}{}{}{\slshape}{}{\bfseries}{}{.5em}{}
\theoremstyle{theorem}
\newtheorem{theorem}{Theorem}[section]
\newtheorem{lemma}[theorem]{Lemma}
\newtheorem{proposition}[theorem]{Proposition}
\newtheorem{corollary}[theorem]{Corollary}
\newtheorem{mainthm}{Theorem}
\Crefname{mainthm}{Theorem}{Theorems}
\newtheorem{maincor}[mainthm]{Corollary}
\Crefname{maincor}{Corollary}{Corollaries}
\newtheoremstyle{remark}{}{}{}{}{\scshape}{}{.5em}{}
\theoremstyle{remark} 
\newtheorem{remark}[theorem]{Remark}
\crefname{remark}{Remark}{Remarks}
\Crefname{remark}{Remark}{Remarks}
\crefname{lemma}{Lemma}{Lemmas}
\Crefname{lemma}{Lemma}{Lemmas}
\newtheoremstyle{ex}{}{}{}{}{\scshape}{{:}}{.5em}{}
\theoremstyle{ex}
\newtheorem{example}[theorem]{Example}
\theoremstyle{definition} 
\newtheorem{definition}[theorem]{Definition} 
\titleformat*{\section}{\normalsize \bfseries \filcenter}
\titleformat*{\subsection}{\normalsize \bfseries }
\newtheoremstyle{pr}{}{}{}{}{\scshape}{{:}}{.5em}{}
\theoremstyle{pr}
\newtheorem*{pr*}{Proof}
\renewenvironment{proof}[1][\proofname]{{\noindent\scshape #1: }}{\null\hfill\qedsymbol}
\renewcommand{\thesection}{\Roman{section}}
\titleformat{\section}[block]{\large\scshape}{\thesection.}{0.5em}{}
\renewcommand{\thesubsection}{\Roman{section}.\alph{subsection}}
\titleformat{\subsection}[block]{\scshape}{\thesubsection.}{0.5em}{}
\titleformat{\subsubsection}[block]{\bfseries}{}{0.5em}{}
\newcommand{\pagetitre}[8]{
\noindent\rule{\linewidth}{0.5pt}
\begin{center}
\begin{tabular}{c}
{\LARGE\textmd{\scshape #1}}\\[2pt]
{\LARGE\textmd{\scshape #2}}\\[15pt]   
{\large #3 {\scshape #4},\footnotemark}
{\large #5 {\scshape #6},\footnotemark} and 
{\large #7 {\scshape #8}\footnotemark}
\end{tabular}
\end{center}
\vspace{-0.2cm}
\noindent\rule{\linewidth}{0.5pt}}
\renewenvironment{abstract}{
    \if@twocolumn
      \section*{\abstractname}
    \else\small 
      \begin{center}
        {\scshape \abstractname\vspace{-0.25cm}}
      \end{center}
      \quotation
    \fi}
    {\if@twocolumn\else\endquotation\fi}
\let \savenumberline \numberline
\def \numberline#1{\savenumberline{#1.}}
\renewcommand{\l@section}{\@dottedtocline{1}{1.5em}{2.6em}}
\renewcommand\tableofcontents{
	\begin{center}
	{\scshape \contentsname\vspace{-0.25cm}}
	\end{center}
	\@mkboth{\MakeUppercase\contentsname}{\MakeUppercase\contentsname}
	\@starttoc{toc}
}
\def\namedlabel#1#2{\begingroup
   \def\@currentlabel{#2}
   \label{#1}\endgroup
}
\newcommand{\titre}{Reverse Isoperimetric Inequalities} 
\newcommand{\titrep}{for Lagrangian intersection Floer theory} 
\newcommand{\titrepp}{Reverse isoperimetric inequalities for Lagrangian intersection Floer theory}
\newcommand{\prenomauteurA}{Jean-Philippe} 
\newcommand{\nomauteurA}{Chass\'e} 
\newcommand{\supportA}{Partially supported by the Swiss National Science Foundation (200021\_204107)}
\newcommand{\prenomauteurB}{Jeff} 
\newcommand{\nomauteurB}{Hicks} 
\newcommand{\supportB}{Supported by EPSRC grants EP/V049097/1 and EP/Z53528X/1.}
\newcommand{\prenomauteurC}{Yoon Jae Nick} 
\newcommand{\nomauteurC}{Nho} 
\newcommand{\supportC}{Supported by the Cambridge Commonwealth European and International Trust scholarship}
\newcommand{\RR}{\mathbb R}
\newcommand{\ZZ}{\mathbb Z}
\newcommand{\CC}{\mathbb C}
\newcommand{\R}{\mathbb R}
\newcommand{\C}{\mathbb C}
\renewcommand{\Im}{\text{Im}}
\DeclareMathOperator{\id}{Id}
\DeclareMathOperator{\st}{\; |\; }
\DeclareMathOperator{\Length}{Length}
\DeclareMathOperator{\Area}{Area}
\DeclarePairedDelimiter\abs{\lvert}{\rvert}
\newcommand{\absx}{\sqrt{\rho_2}}
\newcommand{\absy}{\sqrt{\rho_1}}
\newcommand{\absxsq}{\rho_2}
\newcommand{\absysq}{\rho_1}
\newcommand{\hpre}{\beta_L}
\newcommand{\injc}{r_L}
\newcommand{\injr}{{D}}
\newcommand{\rloc}{D}
\newcommand{\finj}{r_L}
\newcommand{\hght}{s}
\def\del{\partial}
\newcommand{\locf}{\beta^{A}}
\newcommand{\ff}{\beta^A}
\newcounter{ritem}
\newcommand{\Addresses}{{
  \bigskip
  \footnotesize

 \noindent J.-P.~Chass\'{e}, \textsc{Department of Mathematics,  ETH Z\"urich}\par\nopagebreak
  \noindent \textit{E-mail address}: \texttt{jeanphilippe.chasse@math.ethz.ch}

  \medskip 

  \noindent J.~Hicks, \textsc{School of Mathematics and Statistics,  University of St Andrews}\par\nopagebreak
  \noindent \textit{E-mail address}: \texttt{jeff.hicks@st-andrews.ac.uk}

  \medskip
  \noindent Y.J.~Nho
, \textsc{DPMMS,  University of Cambridge}\par\nopagebreak
  \noindent \textit{E-mail address}: \texttt{yjn23@dpmms.cam.ac.uk}

  \medskip

}}
\begin{document}

\newpage
\setcounter{page}{1}
\thispagestyle{first}

\pagetitre{\titre}{\titrep}{\prenomauteurA}{\nomauteurA}{\prenomauteurB}{\nomauteurB}{\prenomauteurC}{\nomauteurC}

\begin{abstract}
    \noindent We extend \citeauthor{groman2014reverse}'s reverse isoperimetric inequality to pseudoholomorphic curves with punctures at the boundary and whose boundary components lie in a collection of Lagrangian submanifolds with intersections locally modelled on $\RR^n\cap (\RR^{k}\times \sqrt{-1}\RR^{n-k})$ inside $\CC^n$. Our construction closely follows the methods used by \citeauthor{duval2016result} and \citeauthor{abouzaid2021homological} and corrects an error appearing in the latter approach.
\end{abstract}

\noindent\rule{\linewidth}{0.5pt}
\tableofcontents
\noindent\rule{\linewidth}{0.5pt}

\section{Introduction}
    \Citeauthor{groman2014reverse}'s reverse isoperimetric inequality for $J$-holomorphic curves is an important tool in the study of Floer cohomology of Lagrangian submanifolds. Let $(X, \omega, J)$ be a $2n$-dimensional symplectic manifold with a choice of compatible almost complex structure. Given a Lagrangian submanifold $L \subset X$, \cite[Theorem 1.1]{groman2014reverse} states that there exists a constant $K$ such that, for all $J$-holomorphic curves $u : (\Sigma, \partial \Sigma) \to (X, L)$ with boundary in $L$, we have a \emph{reverse isoperimetric inequality}:
\begin{equation}
 \Length(u(\partial \Sigma))\leq K \cdot \Area(u(\Sigma)),
    \label{eq:reverseIsoperimetricInequality}
\end{equation}
where length and area are given by the metric $\omega(\cdot, J\cdot)$. A different proof of this inequality was subsequently given by \citeauthor{duval2016result}~\cite{duval2016result}, whose arguments were later adapted to the setting of $J$-holomorphic polygons with boundary on a configuration of transversely intersecting Lagrangian submanifolds by \citeauthor{abouzaid2021homological}~\cite{abouzaid2021homological}.

An explicit computation of the constant appearing in  \cref{eq:reverseIsoperimetricInequality} gives a quantitative bound between the length and area of $J$-holomorphic curves in terms of the geometry of the Lagrangian $L$. However, the existence of some constant $K$ bounding the length in terms of area is sufficient for many applications. For example, consider a Liouville domain $X$ and a Lagrangian $L$ that has a cylindrical end. If $u$ is a $J$-holomorphic curve with boundary on $L$ of bounded energy, then \cref{eq:reverseIsoperimetricInequality} implies that the boundary of $u$ can only travel a fixed distance along the cylindrical end. As a consequence, there is a Gromov-compactness result for curves of this type. Such an idea has been used to ensure the compactness of moduli spaces appearing in the definition of certain quilted Floer cohomology groups \cite{torricelli2022projective}. Another application comes from family Floer theory \cite{abouzaid2021homological}, where the convergence of the Floer differential for a non-unitary local system can be proven by showing that the norm of the monodromy of the local system along the boundary of a curve is bounded from above by the perimeter. Similarly, the reverse isoperimetric inequality is useful in adiabatic degeneration situations for multi-graph Lagrangian submanifolds with caustics, where one needs to separate the domain of holomorphic disks into regions that degenerate to Morse flow-trees and regions near the caustics. 

In some cases, we can derive tight bounds for the constant $K$ in \cref{eq:reverseIsoperimetricInequality}, which endows Floer cohomology with additional structure. For instance, in \cite{hicks2019tropical}, the second author noticed a relationship between the areas of specific $J$-holomorphic strips with boundaries on tropical Lagrangian submanifolds and the affine lengths in tropicalization. This observation can be restated in terms of a bound for the constant $K$ in terms of tropical geometry.

When the boundary Lagrangian $L$ is an embedded Lagrangian submanifold, the constant $K$ roughly measures the radius of a standard symplectic neighbourhood of $L$. In this note, we replace $L$ with a collection $\{L_i\}_{i=1}^m$ of Lagrangian submanifolds with pairwise disjoint locally standard intersections (\cref{def:locallyStandard}).

A reverse isoperimetric inequality for $J$-holomorphic polygons with boundary on transversely intersecting Lagrangian submanifolds had previously appeared in \cite[Appendix A.1]{abouzaid2021homological}. However, the construction of a weakly plurisubharmonic function in that paper contains an error which we describe in \cref{rem:abouzaid}. Therefore, our result also corrects the result appearing there.

\subsection*{Results and strategy of proof}
The results that we prove and the method of proof follow closely that of \citeauthor{duval2016result}~\cite{duval2016result}. Let $(X, \omega, J, g)$ be a 2n-dimensional almost K\"ahler manifold.
Let $S$ be a Riemann surface with marked boundary points whose boundary arcs $\{C_i\}_{i=1}^m$ are labelled by the collection of embedded Lagrangian submanifolds $\{L_i\}_{i=1}^m$. \par

We will need to restrict ourselves to Lagrangian submanifolds intersecting nicely. More precisely, we require that every point in $L_i\cap L_j$, $i\neq j$, is in a chart $\phi:\C^n\to M$ preserving both the symplectic and almost complex structure and sending $\sqrt{-1}\R^n$ to $L_i$ and $\R^{n-k}\times\sqrt{-1}\R^k$ to $L_j$. Here, we identify $\C^n$ with $\R^n\times\sqrt{-1}\R^n$. We call such intersections locally standard,~---~we refer the reader to Subsection~\ref{subsec:localModels} below for details and examples where this happens. 

\begin{mainthm} \label{thm:reverseIsoperimetricInequality}
 Suppose that the intersections $L_i\cap L_j$ are pairwise disjoint and locally standard, and let $B$ be any open neighbourhood of $\cup_{ij} L_i\cap L_j$. There exists constants $K, \finj>0$, depending only on $(X,\omega,J,L_i)$ and $B$ so that, for any $J$-holomorphic curve $u:S\to X$ sending the boundary arc $C_i$ of $\del S$ to $L_i$, $1\leq i\leq m$, and $0<\hght<\finj$:
    \[ \hght\cdot \Length_g(\Im(\partial u)\cap B^c)\leq K\cdot \Area_g(\Im(u)\cap U_{\hght}) .\]

\end{mainthm}
Here, $U_s=\bigcup_i N_s(L_i)$, where $N_{\hght}(L_i)$ is a tubular neighbourhood of $L_i$ of radius $\hght$, and $B^c=X-B$. Furthermore, $\finj$ is a constant smaller than the minimal radial injectivity radius of the Lagrangians, suitably modified to take the intersection locus into account.

By modifying the almost complex structure to make transverse intersections locally standard (\cref{prop:transverseLocallyStandard}), we get the following result.
\begin{maincor} \label{cor:maincor}
 For any collection of Lagrangian submanifolds $L_1, \ldots, L_m\subset X$ which have pairwise disjoint transverse intersections, there exists a choice of almost complex structure so that a reverse isoperimetric inequality {\normalfont \`a la} (\ref{eq:reverseIsoperimetricInequality}) holds. More precisely, (\ref{eq:reverseIsoperimetricInequality}) holds with the caveat that the length is only measured in the complement of some neighbourhood of $\cup_{ij}( L_i\cap L_j)$.
    
\end{maincor}

Note that only being able to estimate the length outside some fixed neighbourhood of the intersection locus $\cup_{ij} L_i\cap L_j$ is enough for most applications such as family Floer theory. For example, it still implies that, given a uniform energy bound, $J$-holomorphic polygons with boundary along $L_1,\dots L_m$ with energy below that bound must stay a bounded distance away from the intersection locus. 

\begin{remark} \label{rem:c0-dense}
 As one will see below, given Lagrangian submanifolds $L_1,\dots, L_m$ with pairwise disjoint transverse intersections and an $\omega$-compatible almost complex structure $J'$, the almost complex structure $J$ satisfying the conclusions of \cref{cor:maincor} can be taken to be $C^0$-close to $J'$ and equal to $J'$ outside $B$.
\end{remark}

\begin{remark} \label{rem:immersed}
 Our proof can also be modified to incorporate teardrops and holomorphic disks in the Lagrangian projections of Legendrians that allow us to compute Legendrian contact homology. Indeed, we can easily allow Lagrangians with transverse self-double intersections, as long as we assume that the self-intersection locus is clean, does not intersect the intersection locus with other Lagrangians, and can be made locally standard. In that case, we can allow $L_i=L_{i+1}=L$ with the corresponding marked point of $\del S$ being sent to a self-intersection point.
\end{remark}

The proof of \cref{thm:reverseIsoperimetricInequality} follows the lines of \cite{duval2016result}, who observes that the square of the distance function  $\rho: N_{\injr}(L)\to \RR$ can be perturbed to give a strictly plurisubharmonic function $h:  N_{\injr}(L)\to \RR$ which vanishes on $L$ with weakly plurisubharmonic square root. In that vein, we produce a function $h: U_{\injr}\to \RR$ which is a small perturbation of $\rho_i: U_{\hght}\to \RR$ away from a neighbourhood of the intersection locus $\cup_{i,j} L_i\cap L_j$ and has weakly plurisubharmonic square root in a neighbourhood of the intersection locus. The proof of \cref{thm:reverseIsoperimetricInequality} can be broken into three steps:
\begin{description}
    \item[\cref{subsec:localModels}] Constructing local models for $h$ near the intersection locus. When $L_i$ and $L_j$ have intersections of the form given by \cref{def:locallyStandard}, we show that $\sqrt{\rho_i\rho_j}$ is weakly plurisubharmonic in a neighbourhood of the intersection locus. \label{item:step1}
    \item[\cref{subsec:interpolation}] Showing that we can interpolate between the local models near the intersection and the function $\rho_i$ away from the intersection while remaining weakly plurisubharmonic. \label{item:step2}
    \item[\cref{subsec:RII}] Modifying \citeauthor{duval2016result}'s proof to instead use the function $h: U_s\to \RR$. \label{item:step3}
\end{description}
We delay the proofs in \cref{subsec:localModels}  that the local models of $h$ are plurisubharmonic until \cref{sec:pshProofs} to improve readability. 

\subsection*{Acknowledgements}
Some initial computations for this paper were performed with the aid of Mathematica \cite{Mathematica}. The authors would like to thank M.~Abouzaid for encouraging them to write this note and an anonymous referee for their detailed and thoughtful comments. The third author would like to thank his supervisor A.~Keating for her useful advice and feedback. 
 \section{The reverse isoperimetric inequality}
        \label{sec:interpolation}
        \subsection{The local model near the intersection}
\label{subsec:localModels}
We restrict ourselves to Lagrangian submanifolds whose intersections have particularly nice local models. 
\begin{definition}
 We say that the intersection between Lagrangian submanifolds $L, L'$ is \emph{locally standard} if at every point $x\in L\cap L'$, there exist a chart $U\subset (\CC^n, \omega_{\CC^n}, J_{\CC^n})$, $\phi: U\to X$; and choice of $0 \leq k \leq n$ so that 
    \begin{align*}
        \phi(0)=x && \phi^{-1}(L)=\sqrt{-1}\RR^n&& \phi^{-1}(L')=\RR^{n-k}\times \sqrt{-1} \RR^k\\
        \phi^*J = J_{\CC^n} && \phi^*\omega= \omega_{\CC^n}.
    \end{align*}
    \label{def:locallyStandard}
\end{definition}

Since intersection points of transversely intersecting Lagrangian submanifolds admit standard neighbourhoods, we directly get the following result.
\begin{proposition}
 For any pair of transversely intersecting Lagrangian submanifolds $L$ and $L'$ in $(X, \omega)$, there exists a choice of compatible almost complex structure so that the intersection is locally standard.
    \label{prop:transverseLocallyStandard}
\end{proposition}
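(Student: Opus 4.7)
The plan is to work one intersection point at a time, producing a Darboux chart in which both Lagrangians are linear, and then to modify $J$ in a small neighborhood of each intersection point so that it becomes standard in the chart. Since transverse intersections are isolated (and form at most a countable set, locally finite if $L\cap L'$ is compact), the modifications can be carried out independently in pairwise disjoint neighborhoods.

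For the Darboux chart, fix $x\in L\cap L'$. The tangent spaces $T_xL$ and $T_xL'$ are transverse Lagrangian subspaces of $(T_xX,\omega_x)$, and a standard linear algebra argument produces a symplectic isomorphism $T_xX\cong(\CC^n,\omega_{\CC^n})$ sending $T_xL$ to $\sqrt{-1}\RR^n$ and $T_xL'$ to $\RR^n$ (choose any basis of $T_xL$, take the dual basis of $T_xL'$ under $\omega_x$, and pair them to produce symplectic coordinates). Applying Weinstein's Lagrangian neighborhood theorem to $L$ yields a Darboux chart $\phi_0$ sending $L$ to $\sqrt{-1}\RR^n$; in that chart, $L'$ is a Lagrangian through the origin, transverse to $\sqrt{-1}\RR^n$ and tangent to $\RR^n$ there. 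Near $0$, the projection $(x,y)\mapsto x$ restricted to $L'$ is a local diffeomorphism, so $L'=\{(x,\nabla f(x))\}$ for some smooth $f$ with $\nabla f(0)=0$. The map $\psi(x,y)=(x,y-\nabla f(x))$ is a symplectomorphism preserving each fiber $\{x=c\}$ (in particular $\sqrt{-1}\RR^n$) and straightens $L'$ to $\RR^n$. Composing, $\phi:=\phi_0\circ\psi^{-1}$ is the desired Darboux chart.

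With the chart in hand, the second step modifies $J$ so that $\phi^*J=J_{\CC^n}$. Let $V\Subset V'\Subset U$ be small nested neighborhoods of $0$, and define the new almost complex structure to equal $\phi_*J_{\CC^n}$ on $\phi(V)$, equal to the original $J$ outside $\phi(V')$, and interpolate in between. The interpolation uses the fact that the space of $\omega_p$-compatible almost complex structures on $T_pX$ is contractible (diffeomorphic to a Siegel upper half-space); one chooses a bump function $\beta$ supported in $\phi(V')$ with $\beta\equiv 1$ on $\phi(V)$, and at each point travels along a smoothly varying path in the contractible fiber with parameter $\beta(p)$ between the two compatible structures. Since $L$ and $L'$ remain Lagrangian with respect to $\omega$, they are automatically totally real for the new $J$, so the interpolation causes no trouble there. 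Performing this independently near each intersection point produces the required $J$.

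The main obstacle is the simultaneous straightening of both Lagrangians in Step 1: one cannot appeal directly to Weinstein's theorem for a single Lagrangian and hope that the second one becomes linear. The point is that after straightening $L$, the remaining Lagrangian $L'$ is graphical over the base $\RR^n$ with critical generating function at the origin, so the explicit fiberwise shear $\psi$ kills it. Everything else (the contractibility argument for $J$, and the disjoint-neighborhood bookkeeping at different intersection points) is routine.
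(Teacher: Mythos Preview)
Your proof is correct and is precisely a detailed unpacking of the paper's one-sentence argument (``Since intersection points of transversely intersecting Lagrangian submanifolds admit standard neighborhoods, we directly get the following result''): you construct the standard Darboux chart linearizing both Lagrangians via Weinstein's theorem plus a fiberwise shear, and then locally deform $J$ to the standard structure using contractibility of the space of compatible almost complex structures. The approach is the same; you have simply written out what the paper takes for granted.
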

Observe that there exist locally standard clean intersections.
\begin{example}
 The following construction comes from \cite[Remark, page 9]{cleanintersectioncompactness}. Suppose $K=L_0\cap L_1$ admits a flat metric. Realize the neighbourhood of $K$ as $T^{\ast}L_0$ and $L_1$ as the conormal $NK$.
 Choose a metric $g$ on $L$ such that it is flat in the neighbourhood of $K$ in $L_0$, makes $K$ totally geodesic, and restricts to a globally flat metric on $K$. Let $J$ be the almost complex structure on $T^{\ast}L_0$ induced by the connection on $T^{\ast}L_0$ given by $g$. Taking geodesic normal coordinate sending $K$ to $\mathbb{R}^k\subset \mathbb{R}^n$, we get open charts satisfying the conditions in Definition \ref{def:locallyStandard}. 
\end{example}

By Bieberbach's theorem, any compact flat Riemannian manifold is a finite quotient of the torus. While this puts a restriction on the topology of the intersection, intersections of this form naturally appear in computations motivated by mirror symmetry.
\begin{example}
 Following the notation from \cite{hicks2019tropical}: let $V_1, V_2\subset Q$ be two tropical subvarieties in an affine manifold $Q$.
     
 Suppose that they intersect cleanly in a collection of points $V_1\cap V_2=\{q_1,\ldots, q_k\}$. Whenever $V_1, V_2$ admit tropical Lagrangian lifts $L_{V_1}, L_{V_2}\subset T^*Q/T^*_\ZZ Q$, then the intersection $L_{V_1}\cap L_{V_2}$ is locally standard and is the union of $k$ disjoint tori of dimension  $\dim(Q)-\dim(V_1)-\dim(V_2)$.
\end{example}

For ease of exposition, we will now assume that we are studying $J$-holomorphic curves with boundary on two Lagrangians $L_1$, $L_2$ with locally standard intersections.
The local model for this situation is the intersection in $\CC^n$ of the Lagrangian planes $L^{loc}_1=\{x_i=0\;|\;1\leq i\leq n\}$ and $L^{loc}_2=\{x_i=0,y_j=0\;|\;1\leq i\leq k,k+1\leq j\leq n\}$ for some $0\leq k\leq n$~---~the case $k=0$ corresponds to a transverse intersection. \par

In what follows, we fix $n$ and $k$ as above and consider the functions
\begin{align}
    \rho_1(x,y):=\sum_{i=1}^n x_i^2 \qquad\text{and}\qquad \rho_2(x,y):=\sum_{i=1}^k x_i^2+\sum_{i=k+1}^n y_i^2
    \label{eq:defOfRho}
\end{align}
on $\CC^n=\RR^n_x\oplus \sqrt{-1}\RR^n_y$. Note that $L^{loc}_1=\{\rho_1=0\}$ and $L^{loc}_2=\{\rho_2=0\}$. 

\begin{proposition} \label{prop:prod_psh-clean}
 The functions $\sqrt{\rho_1\rho_2}$ and $\rho_1\rho_2$ are weakly plurisubharmonic on the standard chart $U_x$ at $x\in L^{loc}_1\cap L^{loc}_2$. Furthermore, outside of some variety $V$ such that $V\cap (L^{loc}_1\cup L^{loc}_2)=L^{loc}_1\cap L^{loc}_2$, $\rho_1\rho_2$ is strictly plurisubharmonic.
\end{proposition}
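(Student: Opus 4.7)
The plan is to verify both plurisubharmonicity claims by computing the complex Hessians directly in the given Darboux coordinates and exhibiting them as manifest sums of squares. A product-rule computation, using $(\rho_i)_{z_j\bar z_k}=\tfrac12\delta_{jk}$, gives the Levi form
\[
    \sum_{j,k}(\rho_1\rho_2)_{z_j\bar z_k}v_j\bar v_k \;=\; \tfrac12(\rho_1+\rho_2)|v|^2 + 2\Re(A\bar B),
\]
where $A:=\sum_j x_j v_j$ and $B:=\sum_{j\le k}x_j v_j+\sum_{j>k}(-iy_j)v_j$. The two structural observations are $A-B=\sum_{j>k}z_j v_j$ and the splitting $\rho_1+\rho_2 = 2|x^1|^2+\sum_{j>k}|z_j|^2$, where $x^1:=(x_1,\ldots,x_k)$ and analogous notations $x^2$, $y^2$ are used for the indices $j>k$. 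Applying Cauchy--Schwarz to $|A-B|^2$ and then the parallelogram identity yields
\[
    \sum_{j,k}(\rho_1\rho_2)_{z_j\bar z_k}v_j\bar v_k \;\ge\; |x^1|^2|v|^2+\tfrac12|A+B|^2 \;\ge\; 0,
\]
proving that $\rho_1\rho_2$ is weakly plurisubharmonic. Chasing equality forces $x^1=0$, the first $k$ components of $v$ to vanish, the last $n-k$ components to be parallel to $(\bar z_{k+1},\ldots,\bar z_n)$, and $\sum_{j>k}\bar z_j v_j=0$; hence the non-strict locus is contained in $V:=\{x^1=0\}\cap\{\sum_{j>k}\bar z_j^2=0\}$, and plugging the defining equations of $L_1$ and $L_2$ into $V$ directly verifies $V\cap L_i = L_1\cap L_2$ for $i=1,2$.

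For $\sqrt{\rho_1\rho_2}$, write $f=\rho_1\rho_2$ and use $4f^{3/2}(\sqrt f)_{z_j\bar z_k}=2f\,f_{z_j\bar z_k}-f_{z_j}f_{\bar z_k}$. Since $\sum_j f_{z_j}v_j=A\rho_2+B\rho_1$, a short rearrangement of the previous Hessian calculation simplifies the quadratic form of the right-hand side to
\[
    \rho_1\rho_2(\rho_1+\rho_2)|v|^2-|A\rho_2-B\rho_1|^2.
\]
Writing $A\rho_2-B\rho_1=\sum_j c_j v_j$, the heart of the proof is the algebraic identity
\[
    \sum_j|c_j|^2 \;=\; \rho_1\rho_2\,(|x^2|^2+|y^2|^2),
\]
verified by direct expansion in the scalar quantities $p=|x^1|^2$, $q=|x^2|^2$, $r=|y^2|^2$: both sides equal $(p+q)(p+r)(q+r)$. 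Cauchy--Schwarz then gives $|A\rho_2-B\rho_1|^2\le\rho_1\rho_2(\rho_1+\rho_2)|v|^2$, so $\sqrt{\rho_1\rho_2}$ is plurisubharmonic on $\{\rho_1\rho_2>0\}$. To extend across $L_1\cup L_2$, apply the same computation to the smooth approximant $\sqrt{\rho_1\rho_2+\epsilon}$: the only new term, $2\epsilon\sum_{j,k}(\rho_1\rho_2)_{z_j\bar z_k}v_j\bar v_k$, is non-negative by the plurisubharmonicity of $\rho_1\rho_2$. Hence each $\sqrt{\rho_1\rho_2+\epsilon}$ is plurisubharmonic on all of $\CC^n$, and letting $\epsilon\to0$ recovers $\sqrt{\rho_1\rho_2}$ as a locally uniform limit of plurisubharmonic functions.

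The principal difficulty is spotting and verifying the identity $\sum_j|c_j|^2=\rho_1\rho_2(|x^2|^2+|y^2|^2)$: it provides precisely the cancellation needed to beat the crude bound coming from $|A|^2\le\rho_1|v|^2$ and $|B|^2\le\rho_2|v|^2$, which applied directly falls short by a factor of two. Everything else---the Hessian computations, the parallelogram identity, tracking $V\cap L_i$, and the $\epsilon$-smoothing---is routine.
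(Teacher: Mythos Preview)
Your proof is correct and takes a genuinely different route from the paper's. The paper proves the proposition by explicitly diagonalizing the Hermitian forms: it exhibits eigenvectors $v_0,w_0=-J_{\CC^n}v_0$ for $dd^c\sqrt{\rho_1\rho_2}(\cdot,\sqrt{-1}\cdot)$ with eigenvalue $\frac{2|x^1|^2}{\sqrt{\rho_1\rho_2}}$ and shows the orthogonal complement carries eigenvalue $\frac{\rho_1+\rho_2}{\sqrt{\rho_1\rho_2}}$ (Lemma~III.1), then does the analogous computation for $d\sqrt{\rho_1\rho_2}\wedge d^c\sqrt{\rho_1\rho_2}$ (Lemma~III.2), and combines them via $dd^c(\rho_1\rho_2)=2\sqrt{\rho_1\rho_2}\,dd^c\sqrt{\rho_1\rho_2}+2\,d\sqrt{\rho_1\rho_2}\wedge d^c\sqrt{\rho_1\rho_2}$. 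The degenerate locus $V$ is then extracted by checking when the kernels of the two forms meet nontrivially.

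Your argument replaces this spectral analysis with two applications of Cauchy--Schwarz, the decisive one resting on the identity $\sum_j|c_j|^2=\rho_1\rho_2(|x^2|^2+|y^2|^2)$ (equivalently $(p+q)(p+r)(q+r)$), which is exactly what is needed to close the estimate for $\sqrt{\rho_1\rho_2}$ without diagonalizing. The resulting $V=\{x^1=0,\ \sum_{j>k}z_j^2=0\}$ is the same variety the paper finds, written there as $\{x_i=0\ (i\le k),\ \sum_{j>k}x_jy_j=0,\ \rho_1=\rho_2\}$. Your approach is shorter and avoids the lengthy coordinate verifications of the two lemmas; the paper's approach, in exchange, yields the full eigenvalue data of the Levi form, which could in principle feed into sharper quantitative bounds. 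Your $\epsilon$-smoothing to extend weak plurisubharmonicity of $\sqrt{\rho_1\rho_2}$ across $L_1\cup L_2$ is also a clean way to handle the singular set, a point the paper leaves implicit.
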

We delay the proof until \cref{sec:pshProofs}. We note however that the set $V$ is the precise reason why we need to suppose the existence of standard charts about intersections. Indeed, without it, we do not have an obvious choice of plurisubharmonic function, since $\absysq\absxsq$ might no longer be~---~even weakly~---~plurisubharmonic near $V$. 

\begin{remark}
    \label{rem:abouzaid}
 A different approach to constructing the local model was proposed in \cite[Appendix A.1]{abouzaid2021homological}; unfortunately, this approach contains a gap. The method uses a cutoff function, which is employed to excise a small neighbourhood of the intersections before applying the argument from \cite{duval2016result}. The proposed local model for plurisubharmonic function is  $\rho=\chi(x_1)\cdot \abs{y}^2$, where $\chi$ is a cutoff function that is convex and non-negative. Unfortunately, this will usually not be weakly plurisubharmonic.  If we restrict to $n=2$, the determinant of the Levi matrix of $\rho$ is
    \[4\chi^2-(2y_2\chi')^2+2|y|^2\chi\chi''\]
 Restricting to where $y_1=0, y_2=1$ we obtain the necessary inequality $2\chi^2+\chi\chi''\geq 2(\chi')^2$. Since $\chi'$ dominates $\chi$ as $x_1\to 0$, we can simplify to the condition that 
    \[\chi\chi''\geq 2(\chi')^2,\]
 which is not satisfied, for example, by the standard choice of cutoff function $\exp(-x^{-1})$. Experimentally, we couldn't find a choice of cutoff function that satisfies this relation.
\end{remark}

For $x\in L_1\cap L_2$, let $U_x$ denote the standard neighbourhood provided by \cref{def:locallyStandard} that identifies $L_i$ with $L^{loc}_i$.
In order to use the argument from \cite{duval2016result} we must find a small neighborhood of the $L_i$ for which various functions are plurisubharmonic.  The requirements that we place on this neighborhood size (which we denote by $r_L$) are
\begin{enumerate}[series=rconstraints,label=\emph{(r.\roman*}), ref=\emph{(r.\roman*})]
    \item $r_L$ is less than the injectivity radius of the Lagrangians $L_i$. \label{rlInject}
    \item  $\tilde \rho_i$ is (strictly) plurisubharmonic on $N_{r_L}(L_i)$~---~for a proof of the plurisubharmonicity of $\tilde{\rho}_i$ near $L_i$, see for example Proposition~2.15 of~\cite{CieliebakEliashberg2012}, where plurisubharmonicity is referred to as $J$-convexity;
    \item it is smaller than the Lipschitz constant of each $L_i$, i.e.\ the largest $c>0$ such that for all $x\in L_i$, $B_{c}(x)\cap L_i$ is contractible and $d_L(p,q)\leq c^{-1}d_X(p,q)$ for all $p,q\in B_{c}(x)\cap L_i$;
    \item it is smaller than half the minimal distance between connected components of the intersection locus $L_1\cap L_2$;
    \item the tube $\{\max\{\tilde \rho_1,\tilde \rho_2\}\leq r_L^2\}$ is contained in a finite covering of $L_1\cap L_2$ by locally standard charts $\{\phi_j:U_{x_j}\to M\}$ such that $\phi_j(U_{x_j}\cap \{\max\{\tilde \rho_1,\tilde \rho_2\}\leq r_L^2\})$ is convex for all $j$. \label{rlTube} 
\end{enumerate}

Let $U_{r}:=\bigcup_i N_{r}(L_i)$  and $B_{r}=\bigcup_{x\in L_1\cap L_2}\{x'\in U_x \st \max\{\tilde{\rho}_1,\tilde{\rho}_2\}<{r}^2\}$ for $r\leq \injc$. Our aim in this section is to show the following modification of Duval's statement. To ensure that the functions we consider are plurisubharmonic over the local charts at the intersections, we will have to take a neighborhood of slightly smaller size $Dr_L$ where $D\in (0, 1)$.

\begin{proposition}\label{prop:Duvalconstruction}
For $\finj$ satisfying \ref{rlInject}--\ref{rlTube}  and $\rloc\in (0,1)$ satisfying \ref{rlHessian}--\ref{rlMetricEquivalence} below, there exist constants $C_1$, $C_2$ $C_3$ with the following property. There is a nonnegative function $h: U_{\rloc\finj}\to \RR$ such that the following holds: 
\begin{enumerate}[label=(\arabic*)]
    \item $h$ vanishes precisely on $L_1\cup L_2$\label{cond:1};
    \item $\sqrt{h}$~---~and thus $h$~---~is weakly plurisubharmonic on $U_{\rloc\finj}$ and $h$ is strictly plurisubharmonic on $U_{\rloc\finj}\setminus B_{\finj}$;\label{cond:2} 
    \item the pseudometric $k=dd^ch(\cdot,\sqrt{-1}\cdot)$ is dominated by $C_1g$; \label{cond:3}
    \item on $U_{\rloc\finj}\setminus B_{\finj}$, the pseudometric $k_{\finj}$ is metric-equivalent to $g$ with $C_2^{-1}g\leq k\leq C_2g$; \label{cond:4}
    \item $C_3\sqrt{h}\geq\abs{\nabla h}$ outside $B_{\finj}$.  \label{cond:5}
\end{enumerate}
\end{proposition}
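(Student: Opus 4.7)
The plan is to adapt \citeauthor{duval2016result}'s construction so as to modify $\hpre$ into the desired function $h$, leaving it essentially unchanged near the intersection locus where $\sqrt{\hpre}$ is already weakly plurisubharmonic. The algebraic criterion used throughout is that, for a nonnegative function $h$, its square root $\sqrt{h}$ is weakly plurisubharmonic if and only if
\[2h \cdot dd^c h \geq dh \wedge d^c h\]
holds pointwise as an inequality of $(1,1)$-forms. Inside $B_{Dr}$, where $\hpre = \absxsq\absysq/r^2$, this inequality is guaranteed by \cref{prop:prod_psh-clean}, so no modification is required there.

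On each outer region $N_{Dr}(L_i)\setminus B_r$, $\hpre$ coincides with the squared distance $\rho_i$ to $L_i$, and Duval's construction produces a function $h_i = \rho_i + \phi_i$ with $\phi_i$ a second-order correction vanishing on $L_i$, such that $h_i$ is strictly plurisubharmonic and $\sqrt{h_i}$ is weakly plurisubharmonic. I would then glue the two pictures: set $h := \hpre + \psi \cdot (h_i - \rho_i)$ on each $N_{Dr}(L_i)$, where $\psi$ is a smooth cutoff depending only on $\hpre$, vanishing in a neighborhood of $B_{Dr/2}$ and equal to $1$ on $U\setminus B_r$. With this choice, $h = \hpre$ on the inner region, $h = h_i$ on the outer one, and the transition happens in an intermediate annulus where $\hpre = \rho_i$ is strictly plurisubharmonic.

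Once $h$ is defined, the five properties can be checked individually. Vanishing on $L_1 \cup L_2$ (property~\ref{cond:1}) holds since both $\hpre$ and $\phi_i$ vanish there. For~\ref{cond:2}, the weak plurisubharmonicity of $\sqrt{h}$ reduces to the displayed $(1,1)$-form inequality on each region, while strict plurisubharmonicity of $h$ on $U\setminus B_r$ is inherited from that of $h_i$. The Hessian bounds~\ref{cond:3} and~\ref{cond:4} follow from the corresponding bounds already proved for $dd^c\hpre$ in \cref{prop:pshconstruction} and from the analogous bounds for $dd^c h_i$ in Duval's argument, with constants independent of $r$ thanks to the scaling invariance built into $\hpre$. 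Finally, on $U\setminus B_r$, $h = \rho_i(1 + O(\rho_i))$ and $\abs{\nabla h} = O(\sqrt{\rho_i})$, so $A_1\sqrt{h}\geq\abs{\nabla h}$ holds with a uniform constant, giving~\ref{cond:5}.

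The principal obstacle is verifying the weak plurisubharmonicity of $\sqrt{h}$ in the transition annulus, where $d\psi$ contributes additional terms to both sides of the key $(1,1)$-form inequality. Choosing $\psi$ to depend only on $\hpre$ ensures that $d\psi$ is proportional to $d\hpre$, so these extra terms are comparable with the existing gradient terms. The strict plurisubharmonicity of $\rho_i$ on this annulus, combined with taking Duval's perturbation $\phi_i$ and the variation rate of $\psi$ sufficiently small, provides the positive Hessian slack needed to absorb the new contributions and preserve the inequality throughout the transition.
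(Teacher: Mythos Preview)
Your approach differs from the paper's and contains a concrete gap. The paper does not glue with a cutoff: it sets $h=(\sqrt{\hpre}+C_0r^{-1}\hpre)^2$ globally, so that $\sqrt{h}=\sqrt{\hpre}+C_0r^{-1}\hpre$ is everywhere the sum of a weakly plurisubharmonic term and a strictly plurisubharmonic one. The five conditions are then checked region by region---via \cref{lem:sqrtpsh} on the local-model charts $V_{Dr}(x)$, and via a Taylor-expansion argument in good Weinstein coordinates (following Duval) on $N_{Dr}(L_i)\setminus B_r$---with special attention to showing that $C_0$ and the metric-comparison constants can be taken independent of $r$.

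Your scheme breaks already at the choice of $\psi$. A cutoff depending only on $\hpre$ cannot separate a neighborhood of $B_{Dr/2}$ from $U\setminus B_r$: since $\hpre$ vanishes identically on $L_1\cup L_2$, which meets both regions, the value of $\psi$ on $\{\hpre=0\}$ would have to be simultaneously $0$ and $1$. Even if you replace $\psi$ by a function of the distance to $L_1\cap L_2$, your transition annulus lies inside $B_r$, where $\hpre=\rho_1\rho_2/r^2$ rather than $\rho_i$ as you assert. There $\sqrt{\hpre}$ is only \emph{weakly} plurisubharmonic---its Levi form has an honest zero eigenvalue by \cref{lem:ddcf}---so there is no ``positive Hessian slack'' available to absorb the terms coming from $d\psi$ and $\phi_i$. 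This lack of slack is precisely why \cref{lem:sqrtpsh} is needed: in the region where $\chi$ is active, $\sqrt{\beta_r}$ by itself fails to be plurisubharmonic, and the added $C_0r^{-1}\beta_r$ supplies the missing positivity. \Cref{rem:abouzaid} warns explicitly that cutoff-based constructions of this type tend to destroy the plurisubharmonicity one is trying to preserve.
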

The properties that are hard to establish are \cref{cond:2}-\cref{cond:4}. In \cite{duval2016result}, the function $h$ is constructed by taking $\sqrt{h}=C\rho+\sqrt{\rho}$ for some large constant $C>0$ that depends only on $(X,J,L)$. The key idea of this paper is to use the local model for $L_i^{loc}$s to find a locally defined function satisfying \cref{prop:Duvalconstruction} near the clean intersections and interpolate this function to Duval's function, for some potentially larger $C>0$. For the statement of \cref{thm:reverseIsoperimetricInequality}, we simply re-define $\finj$ to be $\finj \rloc$. 

\subsection{Local Model}\label{subsection:localmodel}

The aim of this section is to show the analogue of Proposition \ref{prop:Duvalconstruction} in the special case where we are in the local neighbourhood. Our function will be constructed so that near the boundary of the local neighbourhood, our function agrees with that of Duval. 

In order to do this, we need to make some preliminary choices. Fix some $A>0$ 
, and define $V_A^{loc}:=\{\rho_1< {A}^2\}\cup \{\rho_2< {A}^2\}$; this $A$ will dictate the size of local neighborhood we want to place intersection points of the $L_i$.\footnote{ As the bound that we produce in this subsection does not depend on any of the constraints we have placed on $r_L$, we introduce the new constant $A$. }
Fix $\chi^A$ be a smooth nondecreasing function such that $\chi^A=t$ for $0\leq \sqrt{t} \leq \frac{A}{2}$ and $\chi^A(t)=A^2$ for $\sqrt{t}\geq \frac{3A}{4}$. Set $\beta^A:=\chi^A(\rho_1)\chi^A(\rho_2)$. As in \cref{fig:localmodel}, our function $\locf$ interpolates between the function $\rho_1\rho_2$ and $A^2\rho_2$ within the region $\frac{A}{2}<\sqrt{\rho_1}<\frac{3A}{4}$.
\begin{proposition}
    \label{lem:sqrtpsh} There exist constants $C_0,C_1,C_2$ and some small $D\in (0,\frac{1}{2}]$ such that 
\begin{enumerate}
    \item The function $h_{loc}=\big(\sqrt{\locf}+C_0\locf\big)^2$ has weakly plurisubharmonic square root on $V^{loc}_{DA}$.  
    \item The pseudometric induced by $h_{loc}$ is $C_1$-dominated from above by the Euclidean metric on $V^{loc}_{DA}$, and it is $C_2$-equivalent to it on $\{\sqrt{\rho_1}> \frac{A}{2}\}\cap V_{{DA}}^{loc}$ and $\{\sqrt{\rho_2}> \frac{A}{2}\}\cap V_{{DA}}^{loc}$.
    \end{enumerate} 
\end{proposition}    
    In order to show \cref{lem:sqrtpsh}, we first need the following intermediary statement. 
\begin{proposition}\label{prop:pshconstruction}
  For every $A>0$, there is a $D\in (0,\frac{1}{2}]$~---~satisfying \ref{rlHessian} below~---~such that the restriction of $\beta^A$ to $V_{{DA}}^{loc}$ is weakly plurisubharmonic.
  Furthermore, $\beta^A$ vanishes at least up to first order on $L_1^{loc}=\{\rho_1=0\}$ and $L_2^{loc}=\{\rho_2=0\}$. The pseudo-metric obtained from $\beta^A$ is dominated from above by the Euclidean metric everywhere, and it is equivalent to it on $\{\sqrt{\rho_1}> \frac{A}{2}\}\cap V_{{DA}}^{loc}$ and $\{\sqrt{\rho_2}> \frac{A}{2}\}\cap V_{{DA}}^{loc}$. 
\end{proposition}
\begin{figure}
    \centering
    \begin{tikzpicture}[scale=.75, rotate=90]
\fill[red!20]  (-2.5,-2) rectangle (3.5,-6);
\fill[red!20]  (2.5,-7) rectangle (-1.5,-1);
\draw[thick] (0.5,-1) -- (0.5,-7);
\draw[thick] (-2.5,-4) -- (3.5,-4);
\node at (2.5,-4) {$L_1$};
\node at (0.5,-2) {$L_2$};
\draw   (0.5,-0.5);
\draw[<->] (0.5,2) -- (0.5,-0.5) -- (2.5,-0.5);
\draw [decorate,
    decoration = {brace}] (-2.5,-4) -- node[below]{$\sqrt{\rho_1}=A$}  (-2.5,-2);
\fill[blue, opacity=.5] (-0.5,-3.5) -- (-0.5,-4.5) -- (0,-4.5) -- (0,-5) -- (1,-5) -- (1,-4.5) -- (1.5,-4.5) -- (1.5,-3.5) -- (1,-3.5) -- (1,-3) -- (0,-3) -- (0,-3.5) -- cycle;
\node[left] at (0.5,2) {$\chi\circ\rho_2$};
\node at (2.5,-0.5) {};
\node[below] at (0.5,1.5) {$1$};
\node at (3.85,-3.75) {$V_{D}$};
\draw (0.5,-0.5) .. controls (0.95,-0.5) and (1.3,-0.2) .. (1.5,0) .. controls (1.75,0.25) and (1.85,1.5) .. (2,1.5);
\draw (2,1.5) -- (2.5,1.5);
\draw[dotted] (-0.5,-1) -- (-0.5,-7) (1.5,-7.5) -- (1.5,1.5) (-2.5,-3) -- (3.5,-3) (3.5,-5) -- (-2.5,-5);
\node[right ] at (1.5,-7.5) {$\sqrt{\rho_2}=A/2$};
\node[above] at (2.5,-0.5) {$\sqrt{\rho_2}$};
\fill[blue, opacity=.5]  (-0.5,-3.5) rectangle (-2.5,-4.5);
\fill[blue, opacity=.5]  (1.5,-3.5) rectangle (3.5,-4.5);
\fill[blue, opacity=.5]  (1,-5) rectangle (0,-7);
\fill[blue, opacity=.5]  (0,-3) rectangle (1,-1);
\end{tikzpicture}     \caption{Local model for the intersection between two Lagrangian submanifolds. The red region represents the region where we have our local model.
 The blue regions is $V_{{DA}}^{loc}$, which is divided into three cases
 by the dashed lines labelling when $\sqrt{\rho_i}=A/2$.}
    \label{fig:localmodel}
\end{figure}
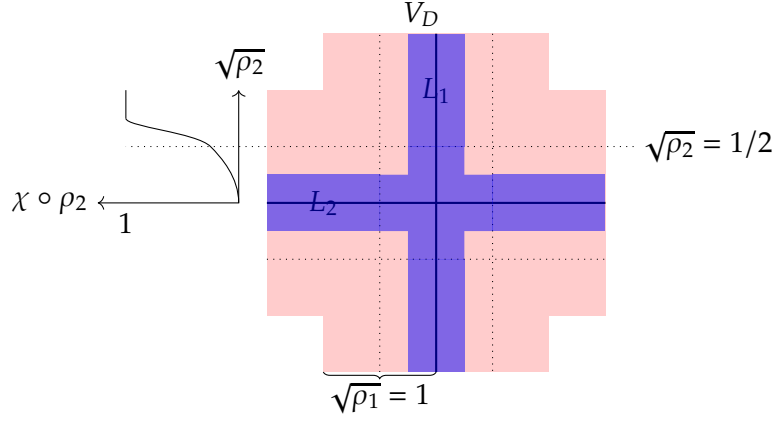
\begin{proof}
 
We first show that $\beta^A$ satisfies \cref{prop:pshconstruction} on a region $V_{{DA}}^{loc}$ by decomposing into three subregions:
\begin{itemize}
    \item Whenever $\absx< A/2$ and $\absy< A/2$, $\beta^A=\absxsq\absysq$, which is plurisubharmonic by \cref{prop:prod_psh-clean}. 
    \item Suppose $\absy\geq \frac{A}{2}$ and $\absx<\frac{A}{2}$. Then $\beta^A$ has the form $\absxsq\chi^A(\absysq)$. So we get
    \begin{align}\label{eq:Hessianform}
 dd^c \beta^A(\cdot,\sqrt{-1}\cdot)=& \chi^A(\absysq)dd^c(\absxsq)+2\absx\Big(d(\chi^A(\absysq))\wedge d^c\absx+ d\absx\wedge d^c(\chi^A(\absysq))\Big)\nonumber\\&+\absxsq\cdot dd^c(\chi^A(\absysq))\nonumber\\
 =&2\chi^A(\absysq)\id+O(\absx).
    \end{align}
 To show that the function $\beta^A$ is plurisubharmonic after shrinking $\absx$, we need to show that the form \eqref{eq:Hessianform} is non-negative.
 Observe that  $\chi^A(\absysq)\geq \frac{A^2}{4}$ for $\absy\geq\frac{A}{2}$, so we may impose condition:  
 \begin{enumerate}[series=lconstraints,label=\emph{(l.\roman*}), ref=\emph{(l.\roman*})]
    \item Require that  $D$ sufficiently small so that \eqref{eq:Hessianform} is positive definitive when $\absx<DA$.\label{rlHessian}
 \end{enumerate} 
    \item The argument is exactly the same for $\absx\geq \frac{A}{2}$ and $\absy<\frac{A}{2}$.
\end{itemize}

The comparison with the Euclidean metric $g_0$ follows from \eqref{eq:Hessianform}.
\end{proof}

We will now strengthen \cref{prop:pshconstruction} and show \cref{lem:sqrtpsh} by replacing $\locf$ with $h_{loc}=(\sqrt{\locf}+C\locf)^2$ for some large constant $C>0$, so that it has weakly plurisubharmonic square root everywhere. This is in analogy with the modification in \citeauthor{duval2016result}, where he replaces the squared distance function $\rho$ with $\big(\sqrt{\rho}+C\rho\big)^2$. The two functions can then be arranged to agree on the overlaps.

\begin{proof}[Proof of \cref{lem:sqrtpsh}].
Observe that by \cref{prop:prod_psh-clean}, we have (weak) plurisubharmonicity of $\sqrt{\ff}$ over $\{\absy<D A\}\cap \{\absx<D A\}$. Furthermore, outside $\{\absy<A\}\cap \{\absx<A\}$, $\ff$ agrees with $A^2\rho_i$ and so it suffices to handle the remaining region, i.e.\ $\{DA<\absy<A, \absx<DA\}\cup\{\absy<DA, DA<\absx<A\}$.

To handle this case, let $\absy<DA\leq \frac{A}{2}$ and $DA<\absx<A$. The function $\ff$ has the form
\[\sqrt{\ff}=\absy \sqrt{\chi^A(\absxsq)},\]
so that
\begin{align*}
 dd^c\sqrt{\ff}= \sqrt{\chi^A}\cdot dd^c \absy+d\absy\wedge d^c \sqrt{\chi^A}+d\sqrt{\chi^A}\wedge d^c\absy+\absy dd^c \sqrt{\chi^A}.
\end{align*}

Note that the only term that might become unbounded as $\absy\to 0$ is $\sqrt{\chi^A}dd^c\absy$ since its expression can contain negative powers of $\absysq$. However, we know that the form $dd^c\absy(\cdot,\sqrt{-1}\cdot)\leq 2A^{-1}\sqrt{\chi^A}dd^c\absy(\cdot,\sqrt{-1}\cdot)$ is positive semidefinite. So the only term that contains negative powers of $\absy$ must already be positive semi-definite. Furthermore, the last three terms may be negative, but they do not contain negative powers of $\absy$. Therefore, their negative contribution may be canceled out by adding some multiple of $dd^c\ff$. 

In other words, for $C_0$ large enough, we choose
 \begin{enumerate}[resume=lconstraints,label=\emph{(l.\roman*}), ref=\emph{(l.\roman*})]
    \item $D>0$ small enough so that both terms on the right hand side of
\begin{align*}
 dd^c\sqrt{\ff}+C_0dd^c\ff=& \sqrt{\chi^A}\cdot dd^c \absy\\&+ \Big(C_0dd^c\ff+ d\absy\wedge d^c \sqrt{\chi^A}+d\sqrt{\chi^A}\wedge d^c\absy+\absy dd^c \sqrt{\chi^A}\Big).
\end{align*}
are positive semidefinite on $V_{DA}^{loc}$. 
\label{rlPositiveSemiDefinite}
\end{enumerate} 
This shows that the function satisfies 1. It is also straightforward to see that this implies the first point of 2, i.e. the pseudometric is dominated by the Euclidean metric $g_0$. 

The equivalence with $g_0$ on $V_{{DA}}^{loc}\setminus \{\max\{\absy,\absx\}< \frac{A}{2}\}$ for some $C_2>0$ is subtler. Observe that, in our region, the function $(\sqrt{\ff}+C_0\ff)^2$ is equal to 
\[\absysq\chi^A(\absxsq)+C_0^{2}\absysq^2\chi^A(\absxsq)^2+2C_0\absy^3\sqrt{\chi^A(\absxsq)}^3.\]
The Hessian of $(\sqrt{\ff}+C_0\ff)^2$ is thus of the form $\chi^A(\rho_2)\id+O(\absy)$. Now, since $\chi^A(t)>\frac{A^2}{4}$ for $\sqrt{t}\geq \frac{A}{2}$, it suffices to ask that
\begin{enumerate}[resume=lconstraints,label=\emph{(l.\roman*}), ref=\emph{(l.\roman*})]
   \item take $D>0$ sufficiently small so that the term $O(\absy)$ is smaller than $\frac{A^2}{4}\id$\label{rlMetricEquivalence}
\end{enumerate}
to ensure that the Hessian is equivalent to $g_0$. Treating the other case similarly finishes the proof.
\end{proof}

\subsection{Interpolating from local model near intersections to the distance function}
\label{subsec:interpolation}
Using \cref{prop:pshconstruction}~---~and the notation from Proposition~\ref{prop:Duvalconstruction}~---~we now interpolate the distance function and the local functions. We first define the global analogue of $\locf$.

Take $x\in L_1 \cap L_2$ and standard chart $\phi_x:U_x\to\C^n$. By taking a smaller chart if necessary, we may suppose that $\phi_x(U_x)=\{\absy <A_x\}\cap \{\absx<A_x\}\cap (-A_x,A_x)^{2n}$ for some $A_x>0$. By definition of $\injc$, the intersection $L_1\cap L_2$ can be covered by finitely such charts $\{\phi_{x_j}:U_{x_j}\to\C^n\}$ with $A_{x_j}<\injc$. We then set $A=\min_j A_{x_j}$, and let $\beta_{j}:=\locf\circ \phi_{x_j}$. Then $\beta_x=\tilde{\rho}_i$ near the part of the boundary where $\sqrt{\tilde{\rho}_j}=A$, $i\neq j$. Furthermore, since $\beta_x$ only depends on the distance from $L_1\cap L_2$, the neighbourhood of which is flat, we see that $\beta_x$ does not depend on the precise choice of the chart. 

Define the function $\hpre:N_{\injc}(L)\to \RR_{\geq 0}$ by the formula
    \begin{align*}
 \hpre:=\begin{cases}
 \beta_j(p) & \text{ if $p\in U_{x_j}$}\\
        A^2    \tilde{\rho}_i(p) & \text{ if $p\in N_{\injc}(L_i)\setminus \bigcup_{j}U_{x_j}$}.
        \end{cases}
    \end{align*}
We then observe that $\sqrt{\hpre}={\absy\absx}$ on a neighbourhood of the clean intersection. Furthermore, on $N_{\injc}(L_i)\setminus B_{\injc}$, we have $\hpre=A^2\tilde{\rho}_i$.

\begin{proof}[Proof of \cref{prop:Duvalconstruction}]
Just as in \cref{prop:pshconstruction}, we deduce that our function $\hpre$ satisfies condition \ref{cond:1}. Furthermore, the pseudo-metric obtained from $\hpre$ is dominated from above by $g$ everywhere, and it is equivalent to it on $\{\tilde{\rho}_1> \frac{A^2}{4}\}\cap N_{\rloc\injc}(L_2)$ and $\{\tilde{\rho}_2> \frac{A^2}{4}\}\cap N_{\rloc\injc}(L_1)$ for small enough  $0<\rloc<\frac{A}{2}$.

Following Duval's approach~\cite{duval2016result}, we prove that for possibly small $\rloc$, there is some constant $C_0$ 
such that $h_{\injc}=\left(\sqrt{\hpre}+C_0\hpre\right)^2$ restricted to $N_{D\injc}(L)$ respects conditions \ref{cond:1}--\ref{cond:5}. Indeed, let $C_0,C_1$ be larger than the constants obtained in \cref{lem:sqrtpsh}, for each standard neighbourhood of the intersection locus. In that case, near the intersection locus, we simply have $h_{\injc}=(\sqrt{\ff}+C_0\ff)\circ\phi_x$ in standard charts, for which the statement reduces to that of \cref{lem:sqrtpsh}. Therefore, it suffices to show these conditions hold on $N_{\rloc\injc}(L_i)\setminus B_{\injc}$, where $\hpre=A^2 \tilde{\rho}_i$. This is the case of a single Lagrangian submanifold, which is handled by Duval~\cite{duval2016result}. This finishes the proof of \cref{prop:Duvalconstruction}.

\end{proof}

 \subsection{Proof of \cref{thm:reverseIsoperimetricInequality}}
    \label{subsec:RII}
    Let $\finj$ and $D$ be as in \cref{prop:Duvalconstruction}. Let $U_s= \cup_i N_{s}(L_i)$ and $U^h_s:=\{h\leq s^2\}$ for $s<D\injc$.
\begin{corollary} \label{cor:reverseisoineq}
Let $B$ be a neighbourhood of $L_1\cap L_2$.  There exists some $K_0>0$ such that
\begin{align}\label{eq:our_reverseisoineq}
\frac{K_0}{s}\Area_g(C\cap U_s)\geq \Length_g(\partial C\cap B^c)
\end{align}
\end{corollary}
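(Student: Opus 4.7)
The plan is to adapt the proof of the reverse isoperimetric inequality due to \citeauthor{duval2016result}~\cite{duval2016result}, replacing his modified squared distance function with $h:U\to\RR$ from \cref{prop:Duvalconstruction}. Set $w:=\sqrt{h}\circ u$ and $\Sigma^t:=u^{-1}(U^h_t)\subset\Sigma$. By \cref{cond:2}, $w$ is continuous and weakly subharmonic on $\Sigma^t$, vanishes on the Lagrangian portion $\partial\Sigma\cap\Sigma^t$ of the boundary, and equals $t$ on the free portion $\partial_{f}\Sigma^t:=\partial\Sigma^t\setminus\partial\Sigma$. Since $h$ and the $\tilde\rho_i$ are uniformly comparable on $U$---by construction $h=\tilde\rho_i$ on $N_{Dr}(L_i)\setminus B_r$, and the local model of \cref{subsec:localModels} controls the comparison on $B_r$---the sublevel sets $U^h_s$ are sandwiched between constant multiples of the tubular neighbourhoods $U_s$, so it suffices to prove the estimate with $U^h_s$ in place of $U_s$.

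The heart of the argument is Stokes' theorem on $\Sigma^t$ together with a smooth cutoff $\chi:X\to[0,1]$ vanishing on $B_r$ and equal to $1$ outside $B$, introduced to localise the argument away from the intersection locus. After regularising $\sqrt{h}$ by $\sqrt{h+\varepsilon}$ and passing to the limit, Stokes yields
\[
    0 \leq \int_{\Sigma^t}\chi\cdot dd^c w = \int_{\partial\Sigma^t}\chi\,d^c w - \int_{\Sigma^t}d\chi\wedge d^c w.
\]
On the Lagrangian boundary inside $\mathrm{supp}\,\chi\subset B^c$, the local normal form $\sqrt{h}=|y|$ about $u(\gamma(\sigma))\in L_i$, together with the fact that $J\cdot du(\gamma'(\sigma))$ is normal to $L_i$ (as $u$ is $J$-holomorphic and $L_i$ Lagrangian), yields $u^*(d^c\sqrt{h})(\gamma')=-|du(\gamma')|_g$; hence this contribution equals $-\Length_g(\Im(\partial u)\cap B^c)$, up to a transition-region term that vanishes in the appropriate limit. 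On the free portion, $w=t$ being constant forces $|d^c w|_\Sigma\leq(A_1/2)|du|_g$ by \cref{cond:5}, bounding that contribution by $(A_1/2)\Length_g(\Im(u)\cap\{w=t\})$.

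Rearranging Stokes gives, for each $t\in(s/2,s)$,
\[
    \Length_g(\Im(\partial u)\cap B^c)\leq (A_1/2)\,\Length_g(\Im(u)\cap\{w=t\}) + \mathrm{corr}(t),
\]
with $\mathrm{corr}(t)$ controlled by $\int_{\Sigma^t}|d\chi|_g\,|d^c w|_g\,dA_\Sigma$. Integrating over $t\in(s/2,s)$ and applying the coarea formula to $w\circ u$ on $\Sigma^s$ together with the holomorphic conformality identity $|du|_g^2\,dA_\Sigma=2u^*\omega$ produces the desired factor of $1/s$:
\[
    (s/2)\,\Length_g(\Im(\partial u)\cap B^c)\leq (A_1^2/2)\,\Area_g(\Im(u)\cap U^h_s) + \int_{s/2}^s\mathrm{corr}(t)\,dt.
\]
The corrector is then handled by the same coarea-type bound, for instance by taking $\chi$ to depend only on $w$ so that $d\chi$ is supported on a thin level-set region. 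Converting $U^h_s$ back to $U_{Cs}$ via the comparability above yields \cref{eq:our_reverseisoineq} with an explicit constant $K$ depending only on $A_1$, $r$, and the constants of \cref{prop:Duvalconstruction}.

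The principal obstacle is the Lagrangian-boundary contribution inside $B$: there $\sqrt{h}=r^{-1}\sqrt{\rho_1\rho_2}$ vanishes on both $L_1$ and $L_2$ but does not reduce to a single squared distance, so Duval's normal-derivative calculation breaks down and could \emph{a priori} contribute with the wrong sign. The cutoff $\chi$ sidesteps this at the cost of the corrector integral, and it is precisely to control this corrector (and to secure the comparabilities between $h$, the $\tilde\rho_i$, and the metric $g$) that the pseudometric estimates \cref{cond:3}--\cref{cond:5} of \cref{prop:Duvalconstruction} and the local-model analysis of \cref{subsec:localModels} are tailored.
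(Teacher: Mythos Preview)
Your approach differs from the paper's, and the corrector step has a gap. The paper introduces no cutoff: working in the pseudometric $k=dd^ch(\cdot,J\cdot)$, it invokes (from \cite{duval2016result}) the monotonicity of $t\mapsto t^{-1}\Area_h(C\cap U^h_t)$ and bounds this quantity from below by
\[
\frac{1}{t}\int_{C\cap U^h_t}dd^ch\;\geq\;\frac{1}{t^2}\int_{C\cap U^h_t\cap B^c}t\,dd^ch\;\geq\;\frac{A_1^{-1}}{t^2}\int_{C\cap U^h_t\cap B^c}|\nabla h|\,dd^ch,
\]
the first inequality dropping $B$ via $dd^ch\geq 0$ and the second using condition~\ref{cond:5}. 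Coarea then turns the right-hand side into an average of $\Length_h(C\cap\{h=\tau\}\cap B^c)$ over $\tau\in(0,t^2)$, whose limit as $t\to 0$ is $\Length_h(\partial C\cap B^c)$; monotonicity transfers this to scale $s$, and the metric equivalence~\ref{cond:4} converts to $\Length_g$.

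In your argument, the proposal that $\chi$ ``depend only on $w$'' cannot give the required support: $B_r$ is a neighborhood of $L_1\cap L_2$, not a sublevel set of $h$, and since $h$ vanishes on all of $L_1\cup L_2$ (which passes straight through $B_r$), no function of $\sqrt h$ alone is $0$ on $B_r$ and $1$ outside $B$. With a genuine spatial cutoff, the term $\int_{\Sigma^t}d\chi\wedge d^cw$ is an integral of $|du|$ over a fixed two-dimensional region and does not reduce to an area bound after integrating in $t$. In fact the cutoff is unnecessary: the concern that the Lagrangian-boundary contribution inside $B$ might carry the wrong sign is unfounded, because $w\geq 0$ with $w|_{\partial\Sigma}=0$ forces the outward-normal derivative of $w$ to be nonpositive along all of $\partial\Sigma$. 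The $B$-portion of $\int_{\partial\Sigma}d^cw$ therefore has the same sign as the $B^c$-portion (only the weight $|\nabla\sqrt h|$ drops below $1$ near the intersection) and can simply be discarded, exactly parallel to the paper's positivity step. With $\chi$ removed your Stokes computation does go through, though you would still need a uniform bound on $|\nabla\sqrt h|$ over all of $U$, not only outside $B_r$, to control the free-boundary term there; this holds for the explicit local model but is not among the conditions recorded in \cref{prop:Duvalconstruction}.
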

\begin{figure}
    \centering
    \begin{tikzpicture}
\node[right] at (4,-4) {$L_1$};
\node[above] at (0.5,-0.5) {$L_2$};
\node[left] at (-1,-1.5) {$L_3$};

\draw[line width=1cm, blue!20] (0.5,-0.5) -- (0.5,-5.5);
\draw[line width=1cm, blue!20] (-1,-4) -- (4,-4);
\draw[line width=1cm, blue!20] (-1,-1.5) .. controls (-0.5,-1.5) and (1.5,-1.5) .. (2,-1.5) .. controls (2.5,-1.5) and (3,-2) .. (3,-2.5) .. controls (3,-3) and (3,-3.5) .. (3,-5.5);

\fill[pattern color=red, pattern=north west lines]  (0,-1) rectangle (1,-2);
\fill[pattern color=red, pattern=north west lines](0,-3.5) rectangle (1,-4.5);
\fill[pattern color=red, pattern=north west lines]  (2.5,-3.5) rectangle (3.5,-4.5);

\draw[thick] (0.5,-0.5) -- (0.5,-5.5);
\draw[thick] (-1,-4) -- (4,-4);
\draw[thick] (-1,-1.5) .. controls (-0.5,-1.5) and (1.5,-1.5) .. (2,-1.5) .. controls (2.5,-1.5) and (3,-2) .. (3,-2.5) .. controls (3,-3) and (3,-3.5) .. (3,-5.5);

\draw [decorate,    decoration = {brace}] (-1,-4) -- node[left]{$s$}  (-1,-3.5);

\node at (-0.5,-4.5) {$U_s$};
\node at (0.75,-3.75) {$B$};
\end{tikzpicture}     \caption{The neighbourhood $U_s$ is highlighted in blue, while the region $B$ (which is excluded in computing the length) has red hash lines.}
\end{figure}
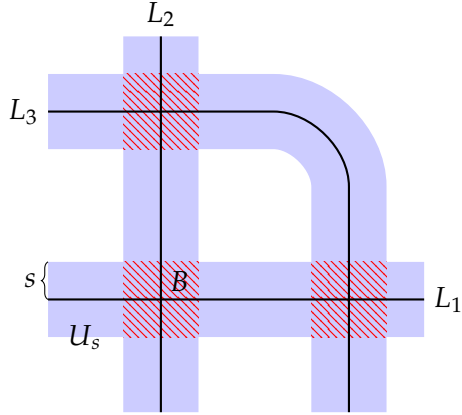
\begin{proof}
The proof is same as in \cite{duval2016result}. We pass to the $h$-metric using metric equivalence and domination, and then prove the inequality for the $h$-metric. 
First, observe that there exists some $l_1>0$ such that $U_s$ contains $U^h_{l_1s}$. By metric domination, we then see that
\begin{align}\label{eq:domination_areabound}
K_1 \Area_g(C\cap U_s)\geq \Area_h(C\cap U^h_{l_1s}).
\end{align}

By Proposition~\ref{prop:Duvalconstruction}, $\sqrt{h}$ is also plurisubharmonic. So the same argument as in \cite{duval2016result} tells us that the function $a(t)=\frac{1}{t}\Area_h(C\cap U^h_t)$ is monotone increasing with respect to $t$. In particular, we see that $a(t)\geq \lim_{t\to 0} a(t)$.

By construction, we can choose some constant $K_2$ so that $|\nabla h|\leq K_2\cdot t$ in $U^h_t$.
We claim that $\lim_{t\to 0}a(t)$ is bounded below by $K_2^{-1} \Length_h(\partial C\cap B^c)$. Indeed, for all $t\leq l_1 s$,
    \begin{align}\label{eq:areaslowerbound}
 \frac{1}{t}\int_{C\cap U^h_t} dd^c h &= \frac{1}{t^2}\int_{C\cap U^h_t} t dd^c h\nonumber \\&\geq \frac{1}{t^2}\int_{C\cap U^h_{t}\cap B^c} t dd^ch \geq \frac{{K_2}^{-1}}{t^2} \int_{C\cap U^h_t\cap B^c} \abs{\nabla h}dd^ch \nonumber\\
        &\geq \frac{K_2^{-1}}{t^2}\int_0^{t^2} \Length_h( C\cap \{h=\tau\}\cap B^c)d{\tau}.
    \end{align}

We can pass from the first line to the second line because the plurisubharmonicity of $h$ tells us that the integrand is non-negative. We get the second inequality on the second line from the final condition in Proposition~\ref{prop:Duvalconstruction}. Finally, to pass from the second line to the third line, we use the coarea formula. Since the limit of \eqref{eq:areaslowerbound} as $t\to 0$ is $K_2^{-1}\Length_h(\partial C\cap B^c)$, we get the desired identity. 

We now conclude the proof as follows. We can use the $C_2$-metric equivalence of $dd^ch(\cdot,\sqrt\cdot)$ and $g$ to pass to the $g$-length. Then using \cref{eq:domination_areabound}, we get \eqref{eq:our_reverseisoineq}. 
\end{proof}

\begin{remark}
As indicated in \cref{rem:immersed}, the proof can be adapted to immersed Lagrangians. Assume that the immersions are locally standard, and separated away from the intersection locus. We then only need to modify $\hpre$ on $N_s(L_i)$ away from $\bigcup_{ij} L_i\cap L_j$ to be $\tilde{\rho}_i$ away from the self-intersection locus and of the form $\absy\absx$ inside its standard charts. The set $B$ will then need to be a neighbourhood of the entire intersection locus, not just $\bigcup_{ij} L_i\cap L_j$. Then the proof of \cref{thm:reverseIsoperimetricInequality} carries over word-to-word the same. 
\end{remark}

\section{\texorpdfstring{Proof of \cref{prop:prod_psh-clean}}{Proof of PSH in local models}}
    \label{sec:pshProofs}
    We give here the full computations necessary to the proof of \cref{prop:prod_psh-clean} in the general clean case. To retrieve the transverse case, i.e.\ the case $k=0$, one can ignore the computations for $i\leq k$ and set $\delta_{i\leq k}=0$ and $\delta_{i>k}=1$ in the notation below. \par

To reduce the number of subscripts in this section, we adopt the notation that $\alpha:=\rho_1, \beta:=\rho_2$.
\label{subsec:cleanPrf}
We will need the following two technical lemmata, which we will prove later on. To enunciate them, we introduce the following notation:
\begin{align*}
        \delta_{i\leq k}=\begin{cases}
            1 &\text{ if }i\leq k \\
            0 &\text{ if }i> k \\
        \end{cases} \quad\text{and}\quad \delta_{i> k}=\begin{cases}
            1 &\text{ if }i> k \\
            0 &\text{ if }i\leq k \\
        \end{cases}.
    \end{align*}

\begin{lemma} \label{lem:ddcf}
    Set $v_0:=\sum_i (\alpha y_i\delta_{i>k}\frac{\del}{\del x_i}+(\beta-\alpha\delta_{i\leq k})x_i\frac{\del}{\del y_i})$ and $w_0:=-J_{\C^n}v_0$, where $J_{\C^n}$ is the ($2n\times 2n$)-matrix representing multiplication by $\sqrt{-1}$ in $\C^n=\RR^{2n}$. The matrix $M_0$ representing the form $dd^c\sqrt{\alpha\beta}(\cdot, \sqrt{-1}\cdot)$ in the standard basis $\{\frac{\del}{\del x_1},\dots,\frac{\del}{\del x_n},\frac{\del}{\del y_1},\dots,\frac{\del}{\del y_n}\}$ is equal to
    \begin{enumerate}[label=(\alph*)]
        \item $\displaystyle \frac{2\sum_{i\leq k}x_i^2}{\sqrt{\alpha\beta}}Id$ on $\mathrm{span}_\R\{v_0,w_0\}$;
        \item $\displaystyle \frac{\alpha+\beta}{\sqrt{\alpha\beta}}Id$ on the orthogonal complement $\mathrm{span}_\R\{v_0,w_0\}^\perp$
    \end{enumerate}
    outside of $L_1\cup L_2$. Here, we take the convention that $\sum_{i\leq k}x_i^2=0$ if $k=0$. In particular, $\sqrt{\alpha\beta}M_0$ is a multiple of the identity matrix precisely on $L_1\cup L_2\cup S_0$, where $S_0:=\{x_i=y_i=0|i>k\}$.
\end{lemma}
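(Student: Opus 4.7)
The plan is to derive an explicit formula for $dd^c\sqrt{\alpha\beta}(v,Jv)$ and then read off the eigenstructure of $M_0$ via a positivity-and-trace argument. First, applying the product rule $dd^c(fh)=h\,dd^cf+df\wedge d^ch+dh\wedge d^cf+f\,dd^ch$ with $f=\sqrt{\alpha}$ and $h=\sqrt{\beta}$, using that both $\alpha$ and $\beta$ are K\"ahler potentials for the standard form (so $dd^c\alpha=dd^c\beta=2\omega$), and substituting the identity $(d\alpha(v))^2+(d\alpha(Jv))^2=4\alpha\,|\Pi_\alpha v|^2$ (with $\Pi_\alpha$ the $g$-orthogonal projection onto the complex line $V_\alpha:=\mathrm{span}_\CC\{\nabla\alpha\}$, and analogously for $\beta$), I obtain
\[
dd^c\sqrt{\alpha\beta}(v,Jv)=\frac{1}{\sqrt{\alpha\beta}}\bigl[(\alpha+\beta)|v|^2-Q(v)\bigr],
\]
where
\[
Q(v):=\beta|\Pi_\alpha v|^2+\alpha|\Pi_\beta v|^2-\tfrac{1}{2}\bigl(d\alpha(v)d\beta(v)+d\alpha(Jv)d\beta(Jv)\bigr).
\]
Equivalently, $\sqrt{\alpha\beta}\,M_0$ represents the quadratic form $(\alpha+\beta)|v|^2-Q(v)$.

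Next, I show $Q$ is positive semi-definite. By Cauchy-Schwarz,
\[
\bigl|d\alpha(v)d\beta(v)+d\alpha(Jv)d\beta(Jv)\bigr|\leq 4\sqrt{\alpha\beta}\,|\Pi_\alpha v|\,|\Pi_\beta v|,
\]
and AM-GM bounds the right side by $2(\beta|\Pi_\alpha v|^2+\alpha|\Pi_\beta v|^2)$. Moreover, $Q$ vanishes on $V_{\alpha\beta}^\perp$, where $V_{\alpha\beta}:=V_\alpha+V_\beta$: if $v\in V_{\alpha\beta}^\perp$, then $\Pi_\alpha v=\Pi_\beta v=0$ and $d\alpha(Jv)=-g(J\nabla\alpha,v)=0$ since $J\nabla\alpha\in V_{\alpha\beta}$ (and similarly for the $\beta$-terms). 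So only $Q|_{V_{\alpha\beta}}$ contributes.

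I then compute two pieces of data. A trace calculation, using $\mathrm{tr}|\Pi_\alpha|^2=\mathrm{tr}|\Pi_\beta|^2=2$ and that the symmetric trace of $d\alpha(v)d\beta(v)$ (as a quadratic form in $v$) is $g(\nabla\alpha,\nabla\beta)=4\sum_{i\leq k}x_i^2$ (likewise for the $J$-twisted term, by $J$-invariance of $g$), gives
\[
\mathrm{tr}(Q)=2(\alpha+\beta)-4\sum_{i\leq k}x_i^2=2R,\qquad R:=\textstyle\sum_{i>k}(x_i^2+y_i^2).
\]
Separately, writing $w_0=\tfrac{1}{2}(\beta\nabla\alpha-\alpha\nabla\beta)$ and $v_0=Jw_0$ (immediate from the statement's definition via $-Jv_0=w_0$), a direct calculation using the coordinate formulas for $\nabla\alpha,\nabla\beta$ and the identities $g(\nabla\alpha,\nabla\beta)=4\sum_{i\leq k}x_i^2$, $g(J\nabla\alpha,\nabla\beta)=4\sum_{i>k}x_iy_i$ yields $Q(w_0)=R\,|w_0|^2$, where the $(\sum_{i>k}x_iy_i)^2$ contributions from the two summands of $Q$ cancel exactly. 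By $J$-invariance $Q(Jv)=Q(v)$, also $Q(v_0)=R\,|v_0|^2$.

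Combining: the real $2$-plane $\mathrm{span}_\RR\{v_0,w_0\}\subset V_{\alpha\beta}$ already saturates the trace $2R$ of $Q|_{V_{\alpha\beta}}$, so by positivity the remaining eigenvalues of $Q|_{V_{\alpha\beta}}$ must vanish. Hence $Q$ vanishes on the orthogonal complement of $\{v_0,w_0\}$ in $\CC^n$. Reading off the eigenvalues of $M_0$ gives $\tfrac{(\alpha+\beta)-R}{\sqrt{\alpha\beta}}=\tfrac{2\sum_{i\leq k}x_i^2}{\sqrt{\alpha\beta}}$ on $\mathrm{span}\{v_0,w_0\}$ and $\tfrac{\alpha+\beta}{\sqrt{\alpha\beta}}$ on the orthogonal complement; they coincide precisely when $R=0$, i.e.\ on $S_0$, while on $L_1\cup L_2$ one has $v_0=w_0=0$, so $\sqrt{\alpha\beta}\,M_0$ is trivially a scalar matrix there. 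The main obstacle is the direct computation $Q(w_0)=R|w_0|^2$, which requires carefully unwinding the projections and exploiting the cross-term cancellation.
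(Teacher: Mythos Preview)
Your proof is correct and takes a genuinely different route from the paper's. The paper proceeds by brute-force coordinate computation: it first writes out the full coordinate expression for $dd^c\sqrt{\alpha\beta}(\cdot,\sqrt{-1}\cdot)$ as a sum over $i,j$ of explicit rational functions in $x,y$, then checks that $v_0$ is an eigenvector by computing each component $dd^c\sqrt{\alpha\beta}(v_0,\sqrt{-1}\,\partial/\partial x_j)$ and $dd^c\sqrt{\alpha\beta}(v_0,\sqrt{-1}\,\partial/\partial y_j)$ separately for $j\leq k$ and $j>k$, and finally takes an arbitrary $v\perp v_0,w_0$ and verifies, again coordinate by coordinate, that $\big(M_0-\tfrac{\alpha+\beta}{\sqrt{\alpha\beta}}\,\mathrm{Id}\big)v=0$ using the two orthogonality relations.

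Your argument is more conceptual. Rewriting $\sqrt{\alpha\beta}\,M_0=(\alpha+\beta)\,\mathrm{Id}-Q$ and establishing $Q\geq 0$ by Cauchy--Schwarz/AM--GM is a clean way to isolate the nontrivial part; the observation $w_0=\tfrac12(\beta\nabla\alpha-\alpha\nabla\beta)$ explains \emph{why} this particular complex line is distinguished. The trace-and-positivity step is the real shortcut: once you know $Q\geq 0$, $Q|_{V_{\alpha\beta}^\perp}=0$, $\mathrm{tr}\,Q=2R$, and $Q(v_0)=Q(w_0)=R|v_0|^2$, the remaining diagonal entries in any orthonormal extension are forced to zero, and the off-diagonal entry $Q(v_0,w_0)$ vanishes by the $J$-invariance $Q(Jv,Jw)=Q(v,w)$ together with $w_0=-Jv_0$. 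This last point is implicit in your write-up and is worth making explicit, since without it one only knows the diagonal of $Q|_{\mathrm{span}\{v_0,w_0\}}$.

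What each approach buys: yours is shorter, avoids four pages of coordinate manipulation, yields the weak plurisubharmonicity of $\sqrt{\alpha\beta}$ as an immediate by-product of $Q\geq 0$, and makes the eigenspace geometrically transparent. The paper's computation is more explicit and self-contained, requiring no auxiliary positivity or trace arguments; it also produces the full matrix of the form in coordinates, which in principle could be reused. For the purposes of the lemma as stated, your method is a strict improvement.
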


\begin{lemma} \label{lem:df-wedge-dcf}
    Set $v_1:=\sum_i (\alpha y_i\delta_{i>k}\frac{\del}{\del x_i}-(\beta+\alpha\delta_{i\leq k})x_i\frac{\del}{\del y_i})$ and $w_1:=J_{\C^n}v_1$. The matrix $M_1$ representing the form $(d\sqrt{\alpha\beta}\wedge d^c\sqrt{\alpha\beta})(\cdot, \sqrt{-1}\cdot)$ in the standard basis $\{\frac{\del}{\del x_1},\dots,\frac{\del}{\del x_n},$ $\frac{\del}{\del y_1},\dots,\frac{\del}{\del y_n}\}$ is equal to
    \begin{enumerate}[label=(\alph*)]
        \item $\displaystyle \left(2\sum_{i\leq k}x_i^2+\alpha+\beta\right)Id$ on $\mathrm{span}_\R\{v_1,w_1\}$;
        \item $0$ on the orthogonal complement $\mathrm{span}_\R\{v_1,w_1\}^\perp$.
    \end{enumerate}
    In particular, $M_1$ is $0$ precisely on $L_1\cup L_2$.
\end{lemma}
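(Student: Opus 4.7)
The plan is to identify $M_1$ explicitly via the gradient of $f:=\sqrt{\alpha\beta}$. Off $L_1\cup L_2$, the chain rule gives
\[df=\frac{1}{\sqrt{\alpha\beta}}\Bigl(\sum_i(\beta+\alpha\delta_{i\leq k})x_i\,dx_i+\sum_i\alpha\delta_{i>k}y_i\,dy_i\Bigr).\]
Dualizing via the Euclidean metric and applying $J_{\C^n}$, one reads off that $v_1=-\sqrt{\alpha\beta}\,J_{\C^n}\nabla f$ and $w_1=\sqrt{\alpha\beta}\,\nabla f$; in particular, the subspace $\mathrm{span}_\R\{v_1,w_1\}$ coincides with the $J_{\C^n}$-invariant plane $\mathrm{span}_\R\{\nabla f,J_{\C^n}\nabla f\}$ at every point off $L_1\cup L_2$, and it collapses to $\{0\}$ exactly on $L_1\cup L_2$, as direct inspection of the zero set of the coefficients confirms.

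Next, I would record the general algebraic fact that, for any smooth real-valued function $f$, the symmetric bilinear form $(df\wedge d^c f)(\cdot,J_{\C^n}\cdot)$ vanishes on $\mathrm{span}_\R\{\nabla f,J_{\C^n}\nabla f\}^\perp$ and acts as $|df|^2\cdot Id$ on $\mathrm{span}_\R\{\nabla f,J_{\C^n}\nabla f\}$. With the convention $d^c f(v)=-df(J_{\C^n}v)$, this reduces to the identity
\[(df\wedge d^c f)(v,J_{\C^n}w)=df(v)\,df(w)+df(J_{\C^n}v)\,df(J_{\C^n}w),\]
from which the kernel is immediately seen to be $\{v:df(v)=0=d^cf(v)\}$, i.e.\ the orthogonal complement of $\mathrm{span}_\R\{\nabla f,J_{\C^n}\nabla f\}$.

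It then remains to compute $|df|^2$. Squaring the coefficients of $df$ produces the numerator
\[\sum_i(\beta+\alpha\delta_{i\leq k})^2 x_i^2+\sum_{i>k}\alpha^2 y_i^2,\]
which I expect to factor neatly using $\sum_i x_i^2=\alpha$ and $\sum_{i>k}y_i^2=\beta-\sum_{i\leq k}x_i^2$ into $\alpha\beta\bigl(\alpha+\beta+2\sum_{i\leq k}x_i^2\bigr)$; after cancellation with the prefactor $\alpha\beta$, this yields the eigenvalue claimed in part (a). I do not anticipate any serious obstacle: this computation is parallel to, and strictly simpler than, the one for \cref{lem:ddcf}, since no second derivatives appear. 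The mildest subtlety is that $df$ itself is a priori undefined on $L_1\cup L_2$, so the assertion that $M_1=0$ there is to be read as the formal consequence of the collapse of $\mathrm{span}_\R\{v_1,w_1\}$ at those points.
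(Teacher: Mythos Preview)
Your proposal is correct and takes a genuinely different route from the paper. The paper writes out the full coordinate expression for $(d\sqrt{\alpha\beta}\wedge d^c\sqrt{\alpha\beta})(\cdot,\sqrt{-1}\cdot)$ and then verifies, by substituting $v_1$ and an arbitrary $v\in\mathrm{span}_\R\{v_1,w_1\}^\perp$ into that expression, that $v_1$ is an eigenvector with the stated eigenvalue and that the orthogonal complement is annihilated; this requires four separate case checks in each verification. You instead observe at the outset that $w_1=\sqrt{\alpha\beta}\,\nabla f$ and $v_1=-\sqrt{\alpha\beta}\,J_{\C^n}\nabla f$, and then invoke the general identity $(df\wedge d^cf)(v,J_{\C^n}w)=df(v)df(w)+df(J_{\C^n}v)df(J_{\C^n}w)$, which exhibits the form as $|df|^2$ times the orthogonal projection onto the complex line through $\nabla f$. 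This reduces the entire lemma to the single computation of $|df|^2$, and your factorization of the numerator via $\sum_{i>k}x_i^2=\alpha-\sum_{i\leq k}x_i^2$ and $\sum_{i>k}y_i^2=\beta-\sum_{i\leq k}x_i^2$ is correct. Your approach is shorter and makes the rank-one (over $\C$) structure of $M_1$ transparent; the paper's approach yields as a byproduct the explicit coordinate formula for the form, though that formula is not used elsewhere. Your remark about the meaning of ``$M_1=0$ on $L_1\cup L_2$'' is also apt: both your argument and the paper's ultimately reduce this to the vanishing of the polynomial vector field $v_1$, since $df$ itself does not extend across $L_1\cup L_2$.
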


\begin{proof}[Proof of Proposition~\ref{prop:prod_psh-clean}]
    Since weak plurisubharmonicity of a function $f$ is equivalent to the positive semidefinitiveness of the form $dd^cf(\cdot, \sqrt{-1}\cdot)$, weak plurisubharmonicity of $\sqrt{\alpha\beta}$ follows directly from Lemma~\ref{lem:ddcf}. \par

    For the weak plurisubharmonicity of $\alpha\beta$, note that
    \begin{align}\label{eq:ddcf2}
    dd^c\alpha\beta=2\sqrt{\alpha\beta}\left(dd^c\sqrt{\alpha\beta}\right)+2\left(d\sqrt{\alpha\beta}\wedge d^c\sqrt{\alpha\beta}\right)
    \end{align}
    outside of $L_1\cup L_2$. One can also directly check that the formula holds also on $L_1\cup L_2$ by taking limits. Since the sum of positive semidefinite matrices is still positive semidefinite, weak plurisubharmonicity of $\alpha\beta$ then follows from Lemmata~\ref{lem:ddcf} and \ref{lem:df-wedge-dcf}. \par

    It also follows from (\ref{eq:ddcf2}) that there exists a vector $v\in T_{(x,y)}\RR^{2n}=\RR^{2n}$ such that $dd^c\alpha\beta(v,\sqrt{-1} v)=0$ if and only if the kernels of $\sqrt{\alpha\beta}M_0$ and $M_1$ intersect nontrivially. In view of Lemmata~\ref{lem:ddcf} and \ref{lem:df-wedge-dcf}, this means one of two things:
    \begin{enumerate}[label=(\alph*)]
        \item either $(x,y)\notin L_1\cup L_2\cup S_0$, $x_i=0$ for all $i\leq k$, and $\mathrm{span}_\R\{v_0,w_0\}\cap\mathrm{span}_\R\{v_1,w_1\}^\perp\neq \{0\}$ or
        \item $(x,y)\in L_1\cup L_2\cup S_0$ and $\alpha+\beta=0$.
    \end{enumerate}
    In option (b), note that $(x,y)\in L_1\cup L_2$ and $\alpha+\beta=0$ is equivalent to $(x,y)\in L_1\cap L_2$. But $(x,y)\in S_0$ and $\alpha+\beta=0$ is also equivalent to $(x,y)\in \{x_i=y_j=0|1\leq i\leq n,j>k\}=L_1\cap L_2$. Therefore, option (b) reduces to $(x,y)\in L_1\cap L_2$. \par

    Suppose now that we are in option (a). Since $\mathrm{span}_\R\{v_i,w_i\}$ is a 1-dimensional complex subspace, $\mathrm{span}_\R\{v_0,w_0\}\cap\mathrm{span}_\R\{v_1,w_1\}^\perp\neq \{0\}$ is equivalent to $v_0\perp v_1$, $v_0\perp w_1$, and $v_0,v_1\neq 0$. However, the fact that $(x,y)\notin L_1\cup L_2\cup S_0$ ensures precisely that the last condition is automatically satisfied. Therefore, option (a) reduces to the following equations: 
    \begin{align*}
    \begin{cases}
        x_i=0 \qquad\forall i\leq k \\
        \sum_{i>k} x_iy_i=0 \\
        \alpha=\beta
    \end{cases}
    \end{align*}
    since $\alpha,\beta\neq 0$ here. \par

    Noting that points respecting option (b) also respect these equations, we thus get that the set where $dd^c\alpha\beta$ is degenerate, i.e.\ where $\alpha\beta$ is not strictly plurisubharmonic, is the variety
    \begin{align*}
        V=\Big\{x_i=0,\ \sum_{j>k} x_jy_j=0,\ 
        \alpha=\beta\ \Big|\ i\leq k\Big\}.
    \end{align*}
\end{proof}

\begin{remark} \label{rem:shape_V}
    When $n-k\leq 1$, we simply have that $V=L_1\cap L_2$. However, when $n-k\geq 2$, $V$ will be a bigger $(k+2)$-codimensional variety of $\RR^{2n}$. For example, when $n-k=2$, it is the union of the two $n$-planes $\{x_i=0,x_n=\pm y_{n-1},y_n=\mp x_{n-1}|i\leq k\}$.
\end{remark}

\begin{proof}[Proof of Lemma~\ref{lem:ddcf}]
    The bilinear form $dd^c\sqrt{\alpha\beta}(\cdot, \sqrt{-1}\cdot)$ can be computed in coordinates to be
    \begin{align} \label{eq:sqrt_ab}
        &\left(\sqrt{\frac{\alpha}{\beta}}+\sqrt{\frac{\beta}{\alpha}}\right)\sum_{i=1}^n(dx_i\otimes dx_i+dy_i\otimes dy_i) \\
        &\quad +\sum_{i,j=1}^n\left[\left(\frac{\delta_{i\leq k}+\delta_{j\leq k}}{\sqrt{\alpha\beta}}-\sqrt{\frac{\alpha}{\beta^3}}\delta_{i\leq k}\delta_{j\leq k}\sqrt{\frac{\beta}{\alpha^3}}\right)x_ix_j-\sqrt{\frac{\alpha}{\beta^3}}y_iy_j\delta_{i>k}\delta_{j>k}\right] \nonumber\\
        &\qquad\qquad\quad \times (dx_i\otimes dx_j+dy_i\otimes dy_j) \nonumber\\
        &\quad +\sum_{i,j=1}^n\left[\left(\frac{1}{\sqrt{\alpha\beta}}-\sqrt{\frac{\alpha}{\beta^3}}\delta_{i\leq k}\right)x_iy_j\delta_{j>k}-\left(\frac{1}{\sqrt{\alpha\beta}}-\sqrt{\frac{\alpha}{\beta^3}}\delta_{j\leq k}\right)x_jy_i\delta_{i>k}\right] \nonumber\\
        &\qquad\qquad\quad \times (dx_i\otimes dy_j+dy_j\otimes dx_i). \nonumber
    \end{align}
    
    Putting the expression for $v_0$ in (\ref{eq:sqrt_ab}) gives, for $j\leq k$, that
    \begin{align*}
        dd^c\sqrt{\alpha\beta}\left(v_0, \sqrt{-1}\frac{\del}{\del x_j}\right)
        &= \sum_{i>k}\left(\sqrt{\frac{\alpha}{\beta}}-\sqrt{\frac{\beta}{\alpha}}\right)x_ix_jy_i+\sum_{i>k}\left(\sqrt{\frac{\beta}{\alpha}}-\sqrt{\frac{\alpha}{\beta}}\right)x_ix_jy_i=0 \\
        \intertext{and}
        dd^c\sqrt{\alpha\beta}\left(v_0, \sqrt{-1}\frac{\del}{\del y_j}\right)
        &= \left(\sqrt{\frac{\beta^3}{\alpha}}-\sqrt{\frac{\alpha^3}{\beta}}\right)x_j \\
        &\qquad +\sum_{i\leq k}\left(3\sqrt{\frac{\beta}{\alpha}}-3\sqrt{\frac{\alpha}{\beta}}+\sqrt{\frac{\alpha^3}{\beta^3}}-\sqrt{\frac{\beta^3}{\alpha^3}}\right)x_i^2x_j \\
        &\qquad +\sum_{i>k}\left[\left(\sqrt{\frac{\beta}{\alpha}}-\sqrt{\frac{\beta^3}{\alpha^3}}\right)x_i^2x_j-\left(\sqrt{\frac{\alpha}{\beta}}-\sqrt{\frac{\alpha^3}{\beta^3}}\right)x_jy_i^2\right] \\
        &= 2\left(\sqrt{\frac{\beta}{\alpha}}-\sqrt{\frac{\alpha}{\beta}}\right)x_j\sum_{i\leq k} x_i^2+\left(\sqrt{\frac{\beta^3}{\alpha}}-\sqrt{\frac{\alpha^3}{\beta}}\right)x_j \\
        &\qquad +\left(\sqrt{\frac{\beta}{\alpha}}-\sqrt{\frac{\beta^3}{\alpha^3}}\right)\alpha x_j-\left(\sqrt{\frac{\alpha}{\beta}}-\sqrt{\frac{\alpha^3}{\beta^3}}\right)\beta x_j \\
        &= \frac{2\sum_{i\leq k}x_i^2}{\sqrt{\alpha\beta}}b_j.
    \end{align*}

    Likewise, for $j>k$, one gets from (\ref{eq:sqrt_ab}) that
    \begin{align*}
        dd^c\sqrt{\alpha\beta}\left(v_0, \sqrt{-1}\frac{\del}{\del x_j}\right)
        &= \left(\sqrt{\frac{\alpha^3}{\beta}}+\sqrt{\alpha\beta}\right)y_j-\sum_{i\leq k}\left(\sqrt{\frac{\beta}{\alpha}}-2\sqrt{\frac{\alpha}{\beta}}+\sqrt{\frac{\alpha^3}{\beta^3}}\right)x_i^2y_j \\
        &\qquad -\sum_{i>k}\left[\sqrt{\frac{\beta}{\alpha}}x_ix_jy_i+\sqrt{\frac{\alpha^3}{\beta^3}}y_i^2y_j+\sqrt{\frac{\beta}{\alpha}}x_i^2y_j-\sqrt{\frac{\beta}{\alpha}}x_ix_jy_i\right] \\
        &= 2\sqrt{\frac{\alpha}{\beta}}y_j\sum_{i\leq k}x_i^2-\sqrt{\frac{\beta}{\alpha}}\alpha y_j-\sqrt{\frac{\alpha^3}{\beta^3}}\beta y_j+\sqrt{\frac{\alpha^3}{\beta}} y_j+\sqrt{\alpha\beta}y_j \\
        &= \frac{2\sum_{i\leq k} x_i^2}{\sqrt{\alpha\beta}}a_j
        \intertext{and}
        dd^c\sqrt{\alpha\beta}\left(v_0, \sqrt{-1}\frac{\del}{\del y_j}\right)
        &= \left(\sqrt{\alpha\beta}+\sqrt{\frac{\beta^3}{\alpha}}\right)x_j+\sum_{i\leq k}\left(\sqrt{\frac{\beta}{\alpha}}-\sqrt{\frac{\alpha}{\beta}}-\sqrt{\frac{\beta^3}{\alpha^3}}+\sqrt{\frac{\beta}{\alpha}}\right)x_i^2x_j \\
        &\qquad -\sum_{i>k}\left[\sqrt{\frac{\beta^3}{\alpha^3}}x_i^2x_j+\sqrt{\frac{\alpha}{\beta}}x_iy_iy_j-\sqrt{\frac{\alpha}{\beta}}x_iy_iy_j+\sqrt{\frac{\alpha}{\beta}}x_jy_i^2\right] \\
        &= 2\sqrt{\frac{\beta}{\alpha}}x_j\sum_{i\leq k} x_i^2+\left(\sqrt{\alpha\beta}+\sqrt{\frac{\beta^3}{\alpha}}\right)x_j -\sqrt{\frac{\alpha}{\beta}}\beta x_j-\sqrt{\frac{\beta^3}{\alpha^3}}\alpha x_j \\
        &= \frac{2\sum_{i\leq k}x_i^2}{\sqrt{\alpha\beta}}b_j.
    \end{align*}
    
    Therefore, when nonzero, $v_0$ is an eigenvector of $M_0$ with associated eigenvalue $\frac{2\sum_{i\leq k}x_i^2}{\sqrt{\alpha\beta}}$. But note that $M_0$ commutes with $J_0$, the $2n\times 2n$ matrix representing multiplication by $i$, since $dd^c\sqrt{\alpha\beta}\left(\cdot, \sqrt{-1}\cdot\right)$ is a $\C$-sesquilinear form. Therefore, $w_0:=-J_0v_0=\sum_i((\beta-\alpha\delta_{i\leq k})x_i\frac{\del}{\del x_i}-\alpha y_i\delta_{i>k}\frac{\del}{\del y_i})$ must also be an eigenvector with the same eigenvalue. \par

    Suppose now that $v=\sum_j(a_j\frac{\del}{\del x_j}+b_j\frac{\del}{\del y_j})$ is orthogonal to both $v_0$ and $w_0$, i.e.\ 
    \begin{align}
        \sum_{i>k}(\alpha y_i\alpha_i+\beta x_ib_i)+(\beta-\alpha)\sum_{i\leq k} x_ib_i&=0 \label{eq:v0-ortho}
        \intertext{and}
        (\beta-\alpha)\sum_{i\leq k}x_ia_i+\sum_{i>k}(\beta x_ia_i-\alpha y_ib_i)&=0. \label{eq:w0-ortho}
    \end{align}
    Denote by $m$ the matrix representing the form $dd^c\sqrt{\alpha\beta}\left(\cdot, \sqrt{-1}\cdot\right)-(\sqrt{\frac{\alpha}{\beta}}+\sqrt{\frac{\beta}{\alpha}})(\cdot,\cdot)$, where $(\cdot,\cdot)$
    is the usual inner product. Using again (\ref{eq:sqrt_ab}), we get for $j\leq k$ that
    \begin{align*}
        \left(mv,\frac{\del}{\del x_j}\right)
        &= \sum_{i\leq k}\left(\frac{2}{\sqrt{\alpha\beta}}-\sqrt{\frac{\alpha}{\beta^3}}-\sqrt{\frac{\beta}{\alpha^3}}\right)x_ix_ja_i
        +\sum_{i>k}\left(\frac{1}{\sqrt{\alpha\beta}}-\sqrt{\frac{\beta}{\alpha^3}}\right)x_ix_ja_i \\
        &\qquad +\sum_{i>k}\left(\frac{1}{\sqrt{\alpha\beta}}-\sqrt{\frac{\alpha}{\beta^3}}\right)x_jy_ib_i \\
        &= \sum_{i\leq k}\left(\frac{2}{\sqrt{\alpha\beta}}-\sqrt{\frac{\alpha}{\beta^3}}-\sqrt{\frac{\beta}{\alpha^3}}\right)x_ix_ja_i-\sum_{i\leq k}(\beta-\alpha)\left(\frac{1}{\sqrt{\alpha\beta^3}}-\frac{1}{\sqrt{\alpha^3\beta}}\right)x_ix_ja_i \\
        &=0,
    \end{align*}
    where we have used (\ref{eq:w0-ortho}) to get the second equality. We analogously get $(mv,\frac{\del}{\del y_j})=0$ from (\ref{eq:v0-ortho}). For $j>k$, we instead have
    \begin{align*}
        \left(mv,\frac{\del}{\del x_j}\right)
        &= \sum_{i\leq k}\left(\frac{1}{\sqrt{\alpha\beta}}-\sqrt{\frac{\beta}{\alpha^3}}\right)x_ix_ja_i-\sum_{i>k}\left(\sqrt{\frac{\beta}{\alpha^3}}x_ix_j+\sqrt{\frac{\alpha}{\beta^3}}y_iy_j\right)a_i \\
        &\qquad -\sum_{i\leq k}\left(\frac{1}{\sqrt{\alpha\beta}}-\sqrt{\frac{\beta}{\alpha^3}}\right)x_iy_jb_i-\sum_{i>k}\left(\frac{x_iy_j}{\sqrt{\alpha\beta}}-\frac{x_jy_i}{\sqrt{\alpha\beta}}\right)b_i \\
        &= \sum_{i\leq k}\left[\left(\frac{1}{\sqrt{\alpha\beta}}-\sqrt{\frac{\beta}{\alpha^3}}\right)(x_ix_ja_i-x_iy_jb_i)
        +\frac{\beta-\alpha}{\sqrt{\alpha^3\beta}}x_ix_ja_i
        +\frac{\beta-\alpha}{\sqrt{\alpha\beta^3}}x_iy_jb_i\right] \\
        &=0,
    \end{align*}
    where we get the second equality using both (\ref{eq:v0-ortho}) and (\ref{eq:w0-ortho}). We get that $(mv,\frac{\del}{\del y_j})=0$ similarly.

    In other words, when restricted to the orthogonal complement of $\mathrm{span}_\R\{v_0,w_0\}$, the form $dd^c\sqrt{\alpha\beta}\left(\cdot, \sqrt{-1}\cdot\right)$ is just $(\sqrt{\frac{\alpha}{\beta}}+\sqrt{\frac{\beta}{\alpha}})(\cdot,\cdot)$. This proves the first part of the lemma. \par

    For the second part, note that there are two ways in which $\sqrt{\alpha\beta} M_0$ becomes a multiple of the identity: either both possible eigenvalues become the same, or $v_0=0$. The first situation happens precisely on $S_0$, while the second one happens precisely on $L_1\cup L_2\cup(S_0\cap\{\alpha=\beta\})$. The union of these spaces is, of course, $L_1\cup L_2\cup S_0$.
\end{proof}

\begin{proof}[Proof of Lemma~\ref{lem:df-wedge-dcf}]
    The proof follows the same structure as that of Lemma~\ref{lem:ddcf}; we give here the details. The bilinear form $d\sqrt{\alpha\beta}\wedge d^c\sqrt{\alpha\beta}(\cdot, \sqrt{-1}\cdot)$ can be computed in coordinates to be
    \begin{small}
    \begin{align}  \label{eq:df-wedge-dcf}
        &\sum_{i,j=1}^n\left[\left(\frac{\beta}{\alpha}+\delta_{i\leq k}+\delta_{j\leq k}+\frac{\alpha}{\beta}\delta_{i\leq k}\delta_{j\leq k}\right)x_ix_j+\frac{\alpha}{\beta}y_iy_j\delta_{i>k}\delta_{j>k}\right](dx_i\otimes dx_j+dy_i\otimes dy_j) \\
        &\quad +\sum_{i,j=1}^n\left[\left(1+\frac{\alpha}{\beta}\delta_{i\leq k}\right)x_iy_j\delta_{j>k}-\left(1+\frac{\alpha}{\beta}\delta_{j\leq k}\right)x_jy_i\delta_{i>k}\right](dx_i\otimes dy_j+dy_j\otimes dx_i). \nonumber
    \end{align}
    \end{small}
    
    Putting the expression for $v_1$ in (\ref{eq:df-wedge-dcf}) gives, for $j\leq k$, that
    \begin{align*}
        d\sqrt{\alpha\beta}\wedge d^c\sqrt{\alpha\beta}\left(v_1, \sqrt{-1}\frac{\del}{\del x_j}\right)
        &= \sum_{i>k}\left(\frac{\beta}{\alpha}+1\right)\alpha x_ix_jy_i-\sum_{i>k}\left(1+\frac{\alpha}{\beta}\right)\beta x_ix_jy_i=0 \\
        \intertext{and}
        d\sqrt{\alpha\beta}\wedge d^c\sqrt{\alpha\beta}\left(v_0, \sqrt{-1}\frac{\del}{\del y_j}\right)
        &= -\sum_{i\leq k}\left(\frac{\beta}{\alpha}+2+\frac{\alpha}{\beta}\right)(\alpha+\beta)x_i^2x_j
        -\sum_{i>k}\left(\frac{\beta}{\alpha}+1\right)\beta x_i^2x_j \\
        &\qquad -\sum_{i>k}\left(1+\frac{\alpha}{\beta}\right)\alpha x_jy_i^2 \\
        &=-2(\alpha+\beta)\sum_{i\leq k}x_i^2x_j-\left(\beta+\frac{\beta^2}{\alpha}\right)\alpha x_j-\left(\frac{\alpha^2}{\beta}+\alpha\right)\beta x_j \\
        &=\left(2\sum_{i\leq k}x_i^2+\alpha+\beta\right)b_j.
    \end{align*}

    Likewise, for $j>k$, one gets from (\ref{eq:df-wedge-dcf}) that
    \begin{align*}
        d\sqrt{\alpha\beta}\wedge d^c\sqrt{\alpha\beta}\left(v_1, \sqrt{-1}\frac{\del}{\del x_j}\right)
        &= \sum_{i>k}\left[\frac{\beta}{\alpha}\alpha x_ix_jy_i+\frac{\alpha}{\beta}\alpha y_i^2y_j+\beta x_i^2y_j-\beta x_iy_jy_i\right] \\
        &\qquad +\sum_{i\leq k}\left(1+\frac{\alpha}{\beta}\right)(\alpha+\beta)x_i^2y_j \\
        &= 2\alpha\sum_{i\leq k}x_i^2y_j+\alpha\beta y_j+\alpha^2y_j \\
        &= \left(2\sum_{i\leq k}x_i^2+\alpha+\beta\right)a_j
        \intertext{and}
        d\sqrt{\alpha\beta}\wedge d^c\sqrt{\alpha\beta}\left(v_1, \sqrt{-1}\frac{\del}{\del y_j}\right)
        &= -\sum_{i\leq k}\left(\frac{\beta}{\alpha}+1\right)(\alpha+\beta)x_i^2x_j \\
        &\qquad -\sum_{i>k}\left[\frac{\beta}{\alpha}\beta x_i^2x_j+\frac{\alpha}{\beta}\beta x_iy_iy_j-\alpha x_iy_iy_j+\alpha x_jy_i^2\right] \\
        &= -2\beta\sum_{i\leq k}x_i^2x_j-\frac{\beta^2}{\alpha}\alpha x_j-\alpha\beta x_j \\
        &= \left(2\sum_{i\leq k}x_i^2+\alpha+\beta\right)b_j.
    \end{align*}
    Therefore, when nonzero, $v_1$ is an eigenvector of $M_1$ with associated eigenvalue $(2\sum_{i\leq k}x_i^2+\alpha+\beta)$. But note that $M_1$ commutes with $J_0$ since $d\sqrt{\alpha\beta}\wedge d^c\sqrt{\alpha\beta}\left(\cdot, \sqrt{-1}\cdot\right)$ is a $\C$-sesquilinear form. Therefore, $w_1:=J_0v_1=\sum_i((\beta+\alpha\delta_{i\leq k})x_i\frac{\del}{\del x_i}+\alpha y_i\delta_{i>k}\frac{\del}{\del y_i})$ must also be an eigenvector with the same eigenvalue. \par

    Suppose now that $v=\sum_j(a_j\frac{\del}{\del x_j}+b_j\frac{\del}{\del y_j})$ is orthogonal to both $v_1$ and $w_1$, i.e.\ 
    \begin{align}
        \sum_{i>k}(\alpha y_i\alpha_i-\beta x_ib_i)-(\alpha+\beta)\sum_{i\leq k} x_ib_i&=0 \label{eq:v1-ortho}
        \intertext{and}
        (\alpha+\beta)\sum_{i\leq k}x_ia_i+\sum_{i>k}(\beta x_ia_i+\alpha y_ib_i)&=0. \label{eq:w1-ortho}
    \end{align}
    Using again (\ref{eq:df-wedge-dcf}), we get for $j\leq k$ that
    \begin{align*}
        d\sqrt{\alpha\beta}\wedge d^c\sqrt{\alpha\beta}\left(v,\sqrt{-1}\frac{\del}{\del x_j}\right)
        &= \sum_{i\leq k}\left(\frac{\beta}{\alpha}+2+\frac{\alpha}{\beta}\right)x_ix_ja_i
        +\sum_{i>k}\left(\frac{\beta}{\alpha}+1\right)x_ix_ja_i \\
        &\qquad +\sum_{i>k}\left(1+\frac{\alpha}{\beta}\right)x_jy_ib_i \\
        &= \sum_{i\leq k}\left(\frac{\beta}{\alpha}+2+\frac{\alpha}{\beta}\right)x_ix_ja_i-\sum_{i\leq k}(\alpha+\beta)\left(\frac{1}{\alpha}+\frac{1}{\beta}\right)x_ix_ja_i \\
        &=0,
    \end{align*}
    where we have used (\ref{eq:w1-ortho}) to get the second equality. We analogously get $d\sqrt{\alpha\beta}\wedge d^c\sqrt{\alpha\beta}(v,\frac{\del}{\del y_j})=0$ from (\ref{eq:v1-ortho}). For $j>k$, we instead have
    \begin{align*}
        d\sqrt{\alpha\beta}\wedge d^c\sqrt{\alpha\beta}\left(v,\sqrt{-1}\frac{\del}{\del x_j}\right)
        &= \sum_{i\leq k}\left(\frac{\beta}{\alpha}+1\right)x_ix_ja_i+\sum_{i>k}\left[\frac{\beta}{\alpha}x_ix_j+\frac{\alpha}{\beta}y_iy_j\right]a_i \\
        &\qquad -\sum_{i\leq k}\left(1+\frac{\alpha}{\beta}\right)x_iy_jb_i-\sum_{i>k}\left[x_iy_j-\frac{\alpha}{\beta}x_jy_i\right]b_i \\
        &=0,
    \end{align*}
    where we get the second equality using both (\ref{eq:v1-ortho}) and (\ref{eq:w1-ortho}). We get that $d\sqrt{\alpha\beta}\wedge d^c\sqrt{\alpha\beta}(v,\sqrt{-1}\frac{\del}{\del y_j})=0$ similarly.

    In other words, when restricted to the orthogonal complement of $\mathrm{span}_\R\{v_1,w_1\}$, the form $d\sqrt{\alpha\beta}\wedge d^c\sqrt{\alpha\beta}(\cdot, \sqrt{-1}\cdot)$ is just $0$. This proves the first part of the lemma. \par

    For the second part, note that there are two ways in which $M_1$ becomes the $0$ matrix: either both possible eigenvalues become the same, or $v_1=0$. The first situation happens precisely on $L_1\cap L_2$, while the second one happens precisely on $L_1\cup L_2$. The union of these spaces is, of course, $L_1\cup L_2$.
\end{proof} 

\newpage
\printbibliography 

\Addresses

\end{document}